\newtheorem{theorem}{Theorem}[section]
\newtheorem{proposition}[theorem]{Proposition}
\newtheorem{lemma}[theorem]{Lemma}
\newtheorem{corollary}[theorem]{Corollary}
\newtheorem{question}{Question}
\newtheorem{conjecture}[question]{Conjecture}
\newtheorem*{theoremA}{Theorem A}
\theoremstyle{definition}
\theoremstyle{remark}
\newtheorem{remark}[theorem]{Remark}
\numberwithin{equation}{section}
\newcommand{\R}{\mathbb{R}}
\newcommand{\Q}{\mathbb{Q}}
\newcommand{\Z}{\mathbb{Z}}
\newcommand{\N}{\mathbb{N}}
\newcommand{\PP}{\mathbb{P}}
\newcommand{\f}{\infty}
\begin{document}

\title[Infinite scalings of the canonical spectrum]{On infinite scalings of the canonical spectrum for self-similar spectral measures}

\author{Zhiqiang Wang}
\address{College of Mathematics and Statistics, Center of Mathematics, Chongqing University, Chongqing 401331, People's Republic of China \& Department of Mathematics, University of British Columbia, Vancouver, British Columbia, V6T 1Z2, Canada}
\email{zhiqiangwzy@163.com,~zqwangmath@cqu.edu.cn}

\subjclass[2020]{Primary: 42C05; Secondary: 28A80, 11A07, 42A65, 11A63}

\begin{abstract}
Let $(\mu, \Lambda)$ be the canonical spectral pair generated by a Hadamard triple $(N,B,L)$ in $\R$ with $0\in B \cap L$, which means that the family $\big\{ e_\lambda(x)=e^{2\pi \mathrm{i} \lambda x}: \lambda \in \Lambda \big\}$ forms an orthonormal basis in $L^2(\mu)$.
We prove that if $\#B < N^{0.677}$, then there are infinitely many primes $p$ such that $(\mu, p\Lambda)$ is also a spectral pair.
Under Artin's primitive root conjecture or the Elliott–Halberstam conjecture, the same conclusion holds for $\# B < N$.
\end{abstract}
\keywords{spectral eigenvalue problem, self-similar measure, Hadamard triple, order module $p$, Artin's primitive root conjecture, Elliott–Halberstam conjecture}

\maketitle

\section{Introduction}

\subsection{Spectral eigenvalue problems}
A Borel probability measure $\mu$ on $\R^d$ is called a \emph{spectral measure} if there exists a countable set $\Lambda \subset \R^d$ such that $$ \big\{ e_\lambda(x)=e^{2\pi \mathrm{i} \lambda \cdot x}: \lambda \in \Lambda \big\} $$forms an orthonormal basis in $L^2(\mu)$. The set $\Lambda$ is called a \emph{spectrum} of $\mu$, and the pair $(\mu,\Lambda)$ is called a \emph{spectral pair}.
It is well-known that the Lebesgue measure on the unit hypercube $[0,1]^d$ and the integer lattice $\Z^d$ constitute a spectral pair.
The study of spectral measures dates back to B. Fuglede, who raised the famous spectral set conjecture \cite{Fuglede-1974}.
In 1998, Jorgensen and Pedersen \cite{Jorgensen-Pedersen-1998} found a class of singular continuous self-similar spectral measures.
For example, the self-similar measure $\mu_{4,\{0,2\}}$, which satisfies the equality
\[ \mu(\;\cdot\;) = \frac{1}{2} \mu(4\;\cdot\;) + \frac{1}{2}\mu(4\;\cdot\; -2),\]
has a spectrum \[ \Lambda_{4,\{0,1\}} = \bigg\{ \sum_{k=0}^{n} \ell_k 4^k:\; n \in \N,\; \ell_0, \ell_1, \ldots,\ell_n \in \{0,1\}  \bigg\}. \]
Since then, a wide variety of singular continuous spectral measures have been constructed, see \cite{Strichartz-2000,Laba-Wang-2002,An-He-2014,An-Fu-Lai-2019,Dutkay-Haussermann-Lai-2019,
Li-Miao-Wang-2022,Lu-Dong-Zhang-2022,An-Lai-2023a,WuH-2024} and the references therein.

An interesting phenomenon occurring in singular continuous spectral measures is the scaling spectrum, which was first discovered by Strichartz \cite[Example 2.9]{Strichartz-2000} and independently by {\L}aba and Wang \cite[Example 3.2]{Laba-Wang-2002}.
They found that some singular continuous spectral measures have both $\Lambda$ and its scaling $t\Lambda$ for some $t>1$ as its spectrum.
A more compelling aspect is that distinct spectra may correspond to varying convergence and divergence properties in mock Fourier series \cite{Strichartz-2006,Dutkay-Han-Sun-2014,Fu-Tang-Wen-2022,Pan-Ai-2023}.
These surprising phenomenons motivate the following two questions.

\begin{question}\label{Q-1}
  For a singular continuous spectral pair $(\mu,\Lambda)$ in $\R$, to find all real number $t \in \R$ such that $(\mu,t\Lambda)$ is also a spectral pair.
\end{question}

\begin{question}\label{Q-2}
  For a singular continuous spectral measure $\mu$ in $\R$, to find all real number $t \in \R$ such that both $(\mu,\Lambda)$ and $(\mu,t\Lambda)$ are spectral pairs for some $\Lambda \subset \R$.
\end{question}

The real number $t \in \R$ in \textbf{Question \ref{Q-1}} and \textbf{Question \ref{Q-2}} is called a \emph{spectral eigenvalue} of the spectral pair $(\mu,\Lambda)$ and the spectral measure $\mu$, respectively.
Sometimes, the real number $t \in \R$ in \textbf{Question \ref{Q-1}} is also called a \emph{complete number} of the spectral pair $(\mu,\Lambda)$.

On \textbf{Question \ref{Q-1}}, significant attention has been directed toward the self-similar spectral measures and its canonical spectrum \cite{Jorgensen-Kornelson-Shuman-2011,Li-2011,Li-Xing-2017,Dutkay-Kraus-2018,He-Tang-Wu-2019,Li-Wu-2022,
Jiang-Lu-Wei-2024,Yi-Zhang-2025a,Chi-He-Wu-2025}, especially the spectral pair $(\mu_{4,\{0,2\}},\Lambda_{4,\{0,1\}})$ \cite{Dutkay-Jorgensen-2012,Dai-2016,
Dutkay-Haussermann-2016}. Dutkay and Haussermann \cite{Dutkay-Haussermann-2016} showed that for any prime $p > 3$, the integer $p^n$ is a spectral eigenvalue of $(\mu_{4,\{0,2\}}, \Lambda_{4,\{0,1\}})$ for all $n \in \N$, and Dai \cite{Dai-2016} characterized all integer spectral eigenvalues.
For the self-similar spectral measure with three digits and its canonical spectrum, He, Tang and Wu \cite{He-Tang-Wu-2019} established an equivalent condition for all spectral eigenvalues, which implies that any given real number can be determined in a finite number of steps whether it is a spectral eigenvalue.
They remarked that the analogous result also holds for $(\mu_{4,\{0,2\}},\Lambda_{4,\{0,1\}})$ by appropriate adjustments.
Recently, Chi, He and Wu \cite{Chi-He-Wu-2025} investigated \textbf{Question \ref{Q-1}} in the setting of $N$-Bernoulli convolutions.
In general, there is a folklore conjecture, see also \cite[Conjecture 6.2]{Dai-Fu-He-2026} and \cite[Conjecture 1.2]{Chi-He-Wu-2025}.

\begin{conjecture}\label{C-3}
  There are infinitely many spectral eigenvalues of a singular continuous spectral pair in $\R$.
\end{conjecture}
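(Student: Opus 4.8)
The plan is to convert the spectral‑pair property of $(\mu,p\Lambda)$ into an elementary digit condition on the prime $p$, and then to feed that condition into results on the multiplicative order of $N$ modulo primes.

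\emph{Step 1: reduction to a digit condition.} Since $(\mu,p\Lambda)$ is a spectral pair exactly when $\{e_{p\lambda}\}_{\lambda\in\Lambda}$ is an orthonormal basis of $L^2(\mu)$, pushing $\mu$ forward under $x\mapsto px$ converts this into the statement that $\Lambda=\Lambda(N,L)$ is a spectrum of the self‑similar measure $\mu_{N,pB}$ (contraction $1/N$, digit set $pB$). This already forces $\gcd(p,N)=1$, and the case $p\mid N$ is degenerate. I would then apply the standard cycle criterion for when a candidate set is a spectrum of a self‑similar measure: $\Lambda(N,L)$ is a spectrum of $\mu_{N,pB}$ precisely when the only cycle of the maps $\{x\mapsto(x+\ell)/N:\ell\in L\}$ on which $|m_{pB}|$ is identically $1$ is the trivial cycle $\{0\}$. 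After reducing to the case $\gcd B=1$ one has $|m_{pB}(\xi)|=1$ iff $p\xi\in\mathbb{Z}$, while the cycles of these maps are exactly the purely periodic base‑$N$ points whose digits all lie in $L$; the criterion therefore unwinds to the assertion that \emph{no rational $a/p$ with $1\le a\le p-1$ has all of its base‑$N$ digits in $L$}. Equivalently, writing $C$ for the self‑similar Cantor set of reals whose base‑$N$ expansion uses only digits from $L$, the condition is that $C$ misses $\{1/p,2/p,\dots,(p-1)/p\}$.

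\emph{Step 2: a dichotomy governed by $\mathrm{ord}_p(N)$.} Put $d=\mathrm{ord}_p(N)$, $s=\log\#B/\log N$, and $\mathcal{B}_p=\{a:1\le a\le p-1,\ a/p\in C\}$. Two observations make "$\mathcal{B}_p=\varnothing$" a clean order condition. First, $a\in\mathcal{B}_p$ implies $Na\bmod p\in\mathcal{B}_p$, since multiplying by $N$ cyclically shifts the length‑$d$ periodic digit block of $a/p$ and does not change its digit set; hence $\mathcal{B}_p$ is a union of cosets of $\langle N\rangle\le(\mathbb{Z}/p\mathbb{Z})^{\times}$, so $\#\mathcal{B}_p\ge d$ whenever $\mathcal{B}_p\ne\varnothing$. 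Second, $a/p\in C$ forces $a/p=D/(N^d-1)$ where $D$ is a positive integer below $N^d-1$, all of whose base‑$N$ digits lie in $L$, and which is divisible by $(N^d-1)/p$; counting such $D$ digit by digit (fix the first $\lceil\log_N p\rceil$ digits freely, after which the relevant residue class modulo $(N^d-1)/p$ meets the remaining interval in at most one point) gives $\#\mathcal{B}_p\le \#B\cdot p^{s}$. Combining the two: if $\mathrm{ord}_p(N)>\#B\cdot p^{s}$ then $\mathcal{B}_p=\varnothing$, i.e.\ $p$ is a spectral eigenvalue of $(\mu,\Lambda)$.

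\emph{Step 3: producing the primes.} Since $s<1$ whenever $\#B<N$, it remains to exhibit infinitely many primes $p$ with $\mathrm{ord}_p(N)>\#B\cdot p^{s}$. Fix $\theta\in(s,1)$. By Baker–Harman's theorem on shifted primes without large prime factors, a positive proportion of primes $p$ satisfy $P^{+}(p-1)>p^{0.677}$, so one may take $\theta=0.677$ unconditionally; under the Elliott–Halberstam conjecture, $P^{+}(p-1)>p^{\theta}$ holds for a positive proportion of $p$ for every $\theta<1$; and Artin's primitive root conjecture (applied to the non‑power core of $N$ to cover perfect powers) gives $\mathrm{ord}_p(N)\gg p$ for infinitely many $p$ directly. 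For a prime $p$ with $\ell:=P^{+}(p-1)>p^{\theta}$ one has $\ell>\sqrt p$, hence $\ell^{2}\nmid p-1$; if $N$ is not an $\ell$‑th power residue modulo $p$ then $\ell\mid\mathrm{ord}_p(N)$ and $\mathrm{ord}_p(N)\ge\ell>p^{\theta}$, while otherwise $\mathrm{ord}_p(N)\mid(p-1)/\ell<p^{1-\theta}$. The primes $p\le x$ of the latter kind number at most $\sum_{e<x^{1-\theta}}\#\{q\ \text{prime}:\mathrm{ord}_q(N)=e\}\ll\sum_{e<x^{1-\theta}}e\log N\ll x^{2(1-\theta)}\log N$, which is $o(\pi(x))$ because $\theta>1/2$. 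Hence a positive proportion of primes $p$ satisfy $\mathrm{ord}_p(N)>p^{\theta}$, which exceeds $\#B\cdot p^{s}$ once $p$ is large, and Step 2 yields the conclusion — for $\#B<N^{0.677}$ unconditionally, and for $\#B<N$ under Artin's conjecture or the Elliott–Halberstam conjecture.

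\emph{Main obstacle.} The crux is Step 1: one has to show that the requirement of $\Lambda(N,L)$ being \emph{simultaneously orthogonal and complete} for $\mu_{N,pB}$ is controlled exactly by the stated digit condition. This needs both the precise structure of the zero set of $m_B$ for a Hadamard triple and the cycle‑based completeness criterion; in particular the orthogonality half must be checked (it is automatic when $\gcd(p,N)=1$, via the $N$‑adic description of $\{\hat\mu_{N,B}=0\}$, but this requires a short argument). The peeling estimate in Step 2 and the number‑theoretic inputs in Step 3 are, by comparison, routine once the framework is in place.
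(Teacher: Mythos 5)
Your overall strategy is the same as the paper's: reduce ``$p$ is a spectral eigenvalue'' to the absence of rationals with denominator $p$ in the attractor $K(N,L)$, show that this absence is forced when $\mathrm{Ord}_N(p)$ is large compared with $p^{s}$ ($s=\log\#B/\log N$), and then invoke Baker--Harman (unconditionally for $0.677$), respectively Artin or Elliott--Halberstam, to produce the primes. Your Step~3 is fine and in fact gives a self-contained substitute for Goldfeld's lemma (counting primes of small order via divisors of $N^e-1$). But two of your steps have genuine gaps. First, Step~1: the equivalence you call a ``standard cycle criterion'' is precisely the crux, and you do not prove the direction you need (the digit condition implies completeness of $p\Lambda$). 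The paper gets this from the explicit construction of the canonical spectrum (Proposition \ref{prop:construct-spectrum}, i.e.\ Theorem A) applied to the auxiliary Hadamard triple $(N,B,pL)$, together with the containment $p\Lambda\subset\Lambda'$. Moreover, your criterion compares the $m_{pB}$-extreme cycles with the \emph{trivial} cycle only, which tacitly assumes $\Lambda=\Lambda_{N,L}$; the canonical spectrum of Theorem A contains $-C$ for \emph{all} $m_B$-extreme cycles, and when $K(N,L)\cap\frac{\Z}{d}\supsetneq\{0\}$ your condition can never hold even though $p$ may well be a spectral eigenvalue of the actual pair $(\mu_{N,B},\Lambda)$. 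The correct condition is the relative one, $K(N,L)\cap\frac{\Z}{pd}=K(N,L)\cap\frac{\Z}{d}$ (Proposition \ref{prop:eigenvalue-equivalent}), i.e.\ ``no \emph{new} extreme cycles,'' not ``no nontrivial extreme cycles.''

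Second, Step~2 as written is inconsistent: your upper bound counts points of the form $D/(N^{d}-1)$ with all base-$N$ digits of $D$ in $L$, i.e.\ points of period exactly $d=\mathrm{Ord}_N(p)$, but a rational $a/p\in K(N,L)$ only satisfies $d\mid m$ for its (minimal) period $m$, not $m=d$; so the $\ge d$ points produced by your multiplication-by-$N$ orbit argument need not be among the points you count, and the naive replacement (counting all points of $K(N,L)$ on the $\frac{1}{pd}$-grid by box-counting) only yields $d\lesssim p^{s/(1-s)}$, which is too weak for the exponent $0.677$. The fix is either to run your digit-by-digit count at the true period $m$ (the modulus $(N^m-1)/p$ scales with the interval, so the bound $\lesssim \#B\,p^{s}$ is uniform in $m$), or to do what the paper does: use the coding-periodicity lemma for rational points (Lemma \ref{lemma:period}), which gives a single bad point an eventually periodic coding of period $m\le c\,d^{s}p^{s}$, whence $p\mid N^m-1$ and $\mathrm{Ord}_N(p)\le m$, contradicting $\mathrm{Ord}_N(p)>p^{\delta}$ directly without any counting. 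With these two repairs (the relative cycle condition plus Theorem A for the modified triple, and the period bound), your argument becomes essentially the paper's proof of Theorems \ref{thm-main-1} and \ref{thm:conditional}; also note that, like the paper, it proves Conjecture \ref{C-3} only for canonical spectral pairs from Hadamard triples under the stated hypotheses, not in full generality.
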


Regarding \textbf{Question} \ref{Q-2}, Fu, He, and Wen \cite{Fu-He-Wen-2018} first characterized all spectral eigenvalues of spectral Bernoulli convolutions (including the particular case $\mu_{4,\{0,2\}}$).
This result was extended by Fu and He \cite{Fu-He-2017} to self-similar spectral measures with consecutive digits and some Moran spectral measures.
For self-similar spectral measures with product-form digit set, spectral eigenvalues  were also characterized in \cite{Li-Wu-2022,Jiang-Lu-Wei-2024}.
In higher dimensions, spectral eigenmatrices  for some self-affine spectral measures have been investigated as well \cite{An-Dong-He-2022,Chen-Liu-2023,Liu-Tang-Wu-2023,Liu-Liu-Tang-Wu-2024,Chen-Cao-2025}.

In \cite{Kong-Li-Wang-2025}, Kong, Li and the author explored \textbf{Question \ref{Q-2}} and provided a class of spectral eigenvalues, which is dense in $[0,+\f)$, for any singular continuous self-similar spectral measure generated by a Hadamard triple.
In this paper, the author will deal with \textbf{Conjecture \ref{C-3}} for singular continuous spectral pairs generated by a Hadamard triple, and will establish connections with number theory.

\subsection{Main results}
Let $\N =\{1,2,3,\ldots\}$, and for $k \in \N$, let $\N_{\ge k} = \N \cap [k,+\f)$.
Let $\#A$ denote the cardinality of a set $A$.

Let $N \in \N_{\ge 2}$ and let $B,L \subset \Z$ be finite sets with $\# B = \#L \ge 2$.
We say $(N,B,L)$ is a \emph{Hadamard triple} in $\R$ if the matrix
\begin{equation}\label{eq:matrix}
  \bigg( \frac{1}{\sqrt{\# B}} e^{2\pi \mathrm{i} \frac{b\ell}{N}} \bigg)_{b \in B, \ell \in L}
\end{equation}
is unitary.
Associated with a Hadamard triple $(N,B,L)$, there are two \emph{iterated function systems} (IFS):
$$\mathcal{F}_{N,B}=\bigg\{ \tau_b(x) = \frac{x+b}{N}: b \in B \bigg\}\;\;\text{and}\;\; \mathcal{F}_{N,L}= \bigg\{ \tau_\ell(x) = \frac{x+\ell}{N}: \ell \in L \bigg\}.$$
According to Hutchinson \cite{Hutchinson-1981}, there exists a unique Borel probability measure $\mu_{N,B}$, which is called a \emph{self-similar measure}, such that $$\mu = \frac{1}{\#B} \sum_{b \in B} \mu\circ\tau_b^{-1}.$$
{\L}aba and Wang \cite{Laba-Wang-2002} proved that the self-similar measure $\mu_{N,B}$ associated with a Hadamard triple $(N,B,L)$ in $\R$ is a spectral measure.
This result was generalized to the self-affine measure in higher dimensions by Dutkay, Haussermann, and Lai \cite{Dutkay-Haussermann-Lai-2019}.

Let $(N,B,L)$ be a Hadamard triple in $\R$ with $0 \in B \cap L$.
Define $$m_B(x) = \frac{1}{\# B} \sum_{b \in B} e^{2\pi \mathrm{i} b x}.$$
A finite set $C=\{x_1,x_2,\ldots, x_k\}$ is called a \emph{cycle} if there exist $\ell_1,\ell_2,\ldots \ell_k \in L$ such that \[ \tau_{\ell_j}(x_j) = x_{j+1} \;\text{for}\; j=1,2,\ldots, k-1, \;\text{and}\; \tau_{\ell_k}(x_k) = x_1.  \]
The cycle $C$ is called an \emph{$m_B$-cycle} (or \emph{extreme cycle}) if $|m_B(x_j)|=1$ for all $1\le j \le k$.
Note that $\{0\}$ is a trivial $m_B$-cycle.
Dutkay and Jorgensen \cite{Dutkay-Jorgensen-2008} gave the following construction of spectrum for self-similar spectral measures, see \cite[Theorem 3.9]{Dutkay-Lai-2017} for an independent proof.

\begin{theoremA}{\rm{\cite[Theorem 2.38]{Dutkay-Jorgensen-2008}}}
  \label{thm:spectrum}
  Let $(N,B,L)$ be a Hadamard triple in $\R$ with $0\in B \cap L$.
  Let $\Lambda$ be the smallest set that contains $-C$ for all $m_B$-cycles $C$, and that satisfies $N \Lambda + L \subset \Lambda$.
  Then the set $\Lambda$ is a spectrum of the self-similar measure $\mu_{N,B}$.
\end{theoremA}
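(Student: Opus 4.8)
The plan is to verify the two halves of the standard spectral criterion. Write $\mu:=\mu_{N,B}$ and, for $\xi\in\R$, $Q_\Lambda(\xi):=\sum_{\lambda\in\Lambda}|\wh{\mu}(\xi+\lambda)|^2$; then $(\mu,\Lambda)$ is a spectral pair if and only if \textbf{(O)} $\wh{\mu}(\lambda-\lambda')=0$ for all distinct $\lambda,\lambda'\in\Lambda$ (orthonormality of $\{e_\lambda\}_{\lambda\in\Lambda}$, which already gives $Q_\Lambda\le1$ pointwise) and \textbf{(C)} $Q_\Lambda\equiv1$ (completeness). I would use throughout that $\wh{\mu}(\xi)=\prod_{j\ge1}m_B(\xi/N^j)$ (uniformly convergent on compacta, $\wh{\mu}$ entire, $|\wh{\mu}|\le1$), the refinement identity $\wh{\mu}(\xi)=m_B(\xi/N)\,\wh{\mu}(\xi/N)$, and the two elementary consequences of unitarity of the matrix in \eqref{eq:matrix}: $m_B\big((\ell-\ell')/N\big)=\delta_{\ell,\ell'}$ for $\ell,\ell'\in L$, and $\sum_{\ell\in L}|m_B(y+\ell/N)|^2=1$ for all $y\in\R$. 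I would also record at the start: $\Lambda=N\Lambda+L$ (since $-C\subset N(-C)+L$ for every cycle $C$); $\{0\}\subset\Lambda$, so $\bigcup_n L_n\subset\Lambda$ with $L_n:=L+NL+\cdots+N^{n-1}L$; and, from $|m_B(c)|=1$, $0\in B$, and the cycle relation $c(N^k-1)\in\Z$, that the points of every $m_B$-cycle — hence all of $\Lambda$ — have uniformly bounded denominators, coprime to $N$.

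For \textbf{(O)}, the classical fact that the canonical spectrum is orthonormal, I would argue recursively from $\Lambda=N\Lambda+L$: each $\lambda\in\Lambda$ has an eventually periodic ``address'' $(\ell_1,\ell_2,\dots)\in L^{\N}$ read off by iterating $\lambda=N\eta+\ell$ down to an $m_B$-cycle (which supplies the periodic tail); the decomposition $\Lambda=\bigsqcup_{\ell\in L}(N\Lambda+\ell)$ is disjoint (again by the coprimality of the denominators to $N$), so the address is well-defined. For distinct $\lambda,\lambda'$ with addresses first differing at position $j_0$, applying the refinement identity $j_0$ times realizes $\wh{\mu}(\lambda-\lambda')$ as a product whose $j_0$-th mask factor is $m_B$ evaluated at $(\ell_{j_0}-\ell'_{j_0})/N$ (up to an integer shift in the integral case), which vanishes by $m_B((\ell-\ell')/N)=\delta_{\ell,\ell'}$; the non-integral case is handled by the same computation after a rescaling making the cycle points integral. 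I would treat this part briefly.

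The substance is \textbf{(C)}. Given (O), $0\le Q_\Lambda\le1$ and $Q_\Lambda$ is lower semicontinuous (an increasing limit of its continuous partial sums over finite subsets of $\Lambda$). Since $-c\in\Lambda$ for every $m_B$-cycle point $c$, one has $Q_\Lambda(c)\ge|\wh{\mu}(0)|^2=1$, hence $Q_\Lambda(c)=1$; together with lower semicontinuity and $Q_\Lambda\le1$ this forces $Q_\Lambda$ to be \emph{continuous at $c$, with value $1$}, at every $m_B$-cycle point $c$. Next, summing the squared refinement identity over the disjoint decomposition $\Lambda=\bigsqcup_{\ell\in L}(N\Lambda+\ell)$ and using $1$-periodicity of $m_B$ with $\sum_\ell|m_B|^2=1$ gives the transfer-operator identity $Q_\Lambda=\mcal{L}Q_\Lambda$, where $(\mcal{L}h)(\xi)=\sum_{\ell\in L}|m_B((\xi+\ell)/N)|^2\,h((\xi+\ell)/N)$ and $\mcal{L}\mathbf{1}=\mathbf{1}$ (for non-integral $m_B$-cycle points one gets instead a vector-valued $\mcal{L}$ indexed by the finitely many residues in $\Lambda\bmod1$, treated the same way). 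Iterating, $Q_\Lambda(\xi)=\E_\xi\big[Q_\Lambda(\eta_n)\big]$ for every $n$, where $(\eta_n)_{n\ge0}$ is the Markov chain with $\eta_0=\xi$ that jumps from a state $\eta$ to $(\eta+\ell)/N$ with probability $|m_B((\eta+\ell)/N)|^2$.

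It then remains to prove that this chain is almost surely absorbed into the union of the $m_B$-cycles: $\mathrm{dist}\big(\eta_n,\bigcup_C C\big)\to0$ a.s. Granting this, the continuity-at-cycle-points observation gives $Q_\Lambda(\eta_n)\to1$ a.s., and dominated convergence ($0\le Q_\Lambda\le1$) yields $Q_\Lambda(\xi)=\lim_n\E_\xi[Q_\Lambda(\eta_n)]=1$, which is (C). For the absorption claim, the $m_B$-cycles are exactly the closed communicating classes — at an $m_B$-cycle point $|m_B|=1$, so $\sum_\ell|m_B((\cdot+\ell)/N)|^2=1$ forces a single deterministic step around the cycle — and the nonnegative supermartingale $V_n:=\prod_{i=1}^n|m_B(\eta_i)|^2$ (with $\E[V_{n+1}\mid\mcal{F}_n]=V_n\sum_\ell|m_B((\eta_n+\ell)/N)|^4\le V_n$) converges a.s., the event $\{V_\infty>0\}$ being precisely that $\eta_n$ approaches the finite set $\{|m_B|=1\}$, on which the now-deterministic dynamics stabilize the digits and trap the orbit into one cycle; a $0$--$1$-law argument, using that from every state one can reach a neighbourhood of a cycle point with probability bounded below by a positive constant (follow a fixed-point route, detouring around the finitely many zeros of $m_B$), then excludes $\{V_\infty=0\}$ having positive probability. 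I expect this absorption/recurrence statement to be the main obstacle of the whole proof: it is precisely where the definition of $m_B$-cycle (extreme cycle) is used in an essential way, where the spurious $\mcal{L}$-fixed points (such as $\mathbf{1}_{C'}$ for an $m_B$-cycle $C'$) that the transfer operator by itself admits are eliminated, and where the non-integral $m_B$-cycle points must be carefully accommodated.
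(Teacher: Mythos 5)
First, note that the paper does not actually prove Theorem A: it is quoted from Dutkay--Jorgensen (with a pointer to Dutkay--Lai for an independent proof), so the only possible comparison is with those published arguments. Your overall strategy -- orthogonality via the Hadamard/digit argument, completeness via the identity $Q_\Lambda=\mathcal{L}Q_\Lambda$, the value $1$ at extreme cycle points, and absorption of the $|m_B|^2$-weighted random walk into the extreme cycles -- is indeed the standard route in that literature, and the preliminary parts are essentially sound. The fuss about non-integral cycle points can be settled cleanly: $|m_B(x)|=1$ together with $0\in B$ forces $x\in\Z/d$ with $d=\gcd(B)$, hence $\Lambda\subset\Z/d$, while $m_B$ is $(1/d)$-periodic because $B\subset d\Z$; this simultaneously gives the disjointness of $\Lambda=\bigsqcup_{\ell\in L}(N\Lambda+\ell)$, the vanishing mask factor in the orthogonality computation, and the scalar transfer-operator identity, with no need for a vector-valued $\mathcal{L}$ or a rescaling detour.

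The genuine gap is exactly where you predict it: the almost-sure absorption claim is not an auxiliary lemma but the whole mathematical content of the theorem, and your sketch of it does not close. Concretely: (i) the assertion that from every state the chain reaches a prescribed neighbourhood of an extreme cycle point, in boundedly many steps, with probability bounded below is unproven -- ``detouring around the finitely many zeros of $m_B$'' is not an argument, since after a detour you sit at a new point from which the route toward a cycle may again be blocked, and the per-step weights, while positive, can be arbitrarily small near (but not at) zeros, so uniformity requires real work (for instance: the set of points in the compact invariant region from which the all-$\tau_0$ route toward the cycle $\{0\}$ is ever blocked is finite, plus a lower-semicontinuity/compactness argument); (ii) even granting such a uniform reachability bound, excluding $\{V_\infty=0\}$ is not a ``$0$--$1$-law'' one can cite off the shelf -- one needs, e.g., a uniform positive lower bound on the probability of absorption once within $\delta$ of a cycle (convergence of the relevant infinite products of weights along the shadowed cycle) combined with a conditional Borel--Cantelli argument; and (iii) the step where near-lattice points ``stabilize the digits'' needs the (easy but necessary) argument that once the orbit is close enough to $\tfrac{1}{d}\Z$ inside the compact region, the shadowing lattice points follow exact IFS dynamics on a finite set and therefore terminate in an extreme cycle, to which the orbit then contracts. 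As written, the completeness half is a plan rather than a proof; the published proofs you would be competing with handle this step with substantial machinery (analytic extension of $Q_\Lambda$ via a Bessel bound on horizontal strips and an invariant-set/minimality analysis of continuous fixed points of $\mathcal{L}$, or the equi-positivity arguments of Dutkay--Haussermann--Lai), which is a fair measure of how much is missing from the sketched absorption step.
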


The spectral pair $(\mu_{N,B},\Lambda)$ in \textbf{Theorem A} is called a \emph{canonical spectral pair} generated by a Hadamard triple $(N,B,L)$ in $\R$.
The unitarity of matrix in (\ref{eq:matrix}) implies that all elements in $B$ are distinct modula $N$. Thus, we have $\# B \le N$.
If $\#B =N$, then the self-similar spectral measure $\mu_{N,B}$ is absolutely continuous.
We focus on the case $\# B < N$.

Let $\pi(x)$ be the prime-counting function, i.e., the number of primes not exceeding $x$.
By the well-known prime number theory, we have \[ \pi(x) \sim \frac{x}{\log x} \;\;\text{as}\; x \to +\f. \]
Our main result is to show that if $\# B$ is small relative to $N$, then there are infinitely many prime spectral eigenvalues of canonical spectral pairs.

\begin{theorem}\label{thm-main-1}
  Let $(\mu_{N,B}, \Lambda)$ be the canonical spectral pair generated by a Hadamard triple $(N,B,L)$ in $\R$ with $0\in B \cap L$.

  {\rm(i)} If $\# B < N^{0.677}$, then there are infinitely many prime spectral eigenvalues of $(\mu_{N,B},\Lambda)$. Furthermore,
  $$\liminf_{x \to +\f} \frac{\# \big\{ p \le x: \; p \;\text{is a prime and a spectral eigenvalue of}\; (\mu_{N,B},\Lambda) \big\}}{\pi(x)} >0.$$

  {\rm(ii)} If $\# B < N^{1/2}$, then almost all primes are spectral eigenvalues of $(\mu_{N,B},\Lambda)$ in the sense that $$\lim_{x \to +\f} \frac{\# \big\{ p \le x: \; p \;\text{is a prime and a spectral eigenvalue of}\; (\mu_{N,B},\Lambda) \big\}}{\pi(x)} = 1.$$
\end{theorem}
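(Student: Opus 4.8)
The plan is to reduce the statement about prime spectral eigenvalues to a purely number-theoretic statement about the multiplicative order of $N$ modulo $p$. First I would recall the mechanism by which a prime $p$ becomes a spectral eigenvalue of $(\mu_{N,B},\Lambda)$. By \textbf{Theorem A}, $\Lambda$ is built from the $m_B$-cycles by the iteration $N\Lambda + L \subset \Lambda$; the scaled set $p\Lambda$ will again be a spectrum of $\mu_{N,B}$ provided it coincides with (or is) the canonical spectrum attached to a suitable Hadamard triple obtained by replacing $L$ with $pL$ (or an $N$-adic adjustment thereof). The key point is that $(N,B,pL)$ fails to be a Hadamard triple only because of the interaction of $p$ with $N$; if $\gcd(p,N)=1$ then multiplication by $p$ is a bijection on $\Z/N\Z$, so the unitarity of the matrix in \eqref{eq:matrix} is preserved, and the only obstruction to $p\Lambda$ being the canonical spectrum is that the set of $m_B$-cycles must be compatible with the scaling — concretely, one needs $p^{-1}$ (equivalently some power of $p$) to act on the finite set of cycle points in a controlled way, which will hold once the order of $N$ modulo $p$, or of $p$ modulo the relevant moduli, is large enough. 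I would make this precise by first proving a clean lemma: \emph{if $p$ is a prime with $\gcd(p,N)=1$ and the multiplicative order of $N$ in $(\Z/p\Z)^\times$ exceeds $\log_{\#B^{-1}}(\text{something}) $ — more honestly, if $p \nmid$ (a fixed finite quantity depending only on $B,L$ and the finitely many $m_B$-cycles) and $p$ avoids finitely many bad residue classes — then $p$ is a spectral eigenvalue.} This isolates the arithmetic.

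The second step is the counting. For part (ii), under $\#B < N^{1/2}$, the condition on $p$ should reduce to $p$ avoiding a \emph{fixed finite set} of primes (the primes dividing $N$ and the primes dividing the finitely many $m_B$-cycle denominators / the quantity appearing in the lemma), plus possibly a lower bound on $\mathrm{ord}_p(N)$ that fails for only $O(1)$ or density-zero many $p$; hence the set of good primes has relative density $1$ in the primes, giving the limit statement. The sharper bound $N^{1/2}$ versus $N^{0.677}$ suggests that in case (i) the lemma's hypothesis is weaker — one only needs $\mathrm{ord}_p(N) \ge c\,p^{\theta}$ or that $N$ is a primitive root, or a lower-bound-on-order condition that holds for a \emph{positive proportion} of primes — and here one invokes the known unconditional results (Hooley-type / the $0.677$ exponent points to the work on the largest prime factor of $p-1$ and Heath-Brown / Gupta–Murty style results, or more precisely Kurlberg–Pomerance-type bounds showing $\mathrm{ord}_p(N)$ is large for a positive density of $p$). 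So step 3 is: quote the appropriate analytic number theory input — for (ii) just the prime number theorem in arithmetic progressions (or merely the infinitude of primes outside a finite set), and for (i) a density result on primes $p$ for which $\mathrm{ord}_p(N)$ exceeds $p^{1/2}$ or similar, which is exactly where the $0.677$ enters. The $\liminf$ positivity in (i) then follows since the good primes form a set of positive lower relative density.

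The main obstacle, and where I would spend most of the effort, is the \emph{reduction lemma}: showing precisely which arithmetic condition on $p$ guarantees that $p\Lambda$ is again a spectrum. One must verify not merely that $\{e_\lambda : \lambda \in p\Lambda\}$ is orthonormal in $L^2(\mu_{N,B})$ (orthonormality is relatively soft and follows from $p$ being coprime to $N$ together with the Hadamard property, via the zero set of $\widehat{\mu}_{N,B}$), but that it is \emph{complete} — this is the genuinely hard direction for self-similar spectra. Here one typically argues by showing $p\Lambda$ arises as the Dutkay–Jorgensen canonical spectrum for the modified triple, checking that the $m_B$-cycles for the modified data are exactly $-p\cdot(\text{original cycles})$ or can be absorbed, which forces a condition like: $p$ times every $m_B$-cycle point, reduced appropriately, stays inside the finitely generated structure — equivalently $\mathrm{ord}_p(N)$ (or the multiplicative order of $N$ modulo the denominators of cycle points) must be large enough that no spurious new extreme cycle appears and no old one is lost. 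Controlling the possible new extreme cycles of the $p$-scaled system — bounding their number and their denominators uniformly — is the crux; once that finiteness/uniformity is in hand, the bad set of primes is finite (or thin) and the counting steps go through routinely via $\pi(x) \sim x/\log x$.
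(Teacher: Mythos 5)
Your high-level skeleton agrees with the paper's: reduce ``$p$ is a spectral eigenvalue'' to a lower bound on $\mathrm{Ord}_N(p)$, then invoke results on primes $p$ for which $p-1$ has a large prime factor (Baker--Harman for the exponent $0.677$, Erd\H{o}s for $1/2$). But the proposal has a genuine gap precisely at the step you yourself flag as the crux: the reduction lemma is never actually supplied, and the key quantitative idea that makes it work is missing. In the paper the reduction has two concrete ingredients. First, a clean characterization (via the explicit seed-set construction of the canonical spectrum, Proposition \ref{prop:construct-spectrum}, applied to the Hadamard triple $(N,B,pL)$): for $\gcd(p,N)=1$, $p$ is a spectral eigenvalue if and only if $K(N,L)\cap \frac{\Z}{pd}=K(N,L)\cap\frac{\Z}{d}$, where $d=\gcd(B)$ --- i.e.\ the self-similar set $K(N,L)$ contains no rational point whose reduced denominator is divisible by $p$. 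Your cycle-by-cycle discussion of ``no spurious new extreme cycle appears and no old one is lost'' gestures at this but gives no mechanism for proving it. Second, and decisively, the paper uses the quantitative fact (Lemma \ref{lemma:period}) that a rational $v/u\in K(N,L)$ in reduced form admits an eventually periodic $L$-coding of period $m\le c\,u^{s}$ with $s=\dim_{\mathrm{H}}K(N,L)$; a point $v/(pu)\in K(N,L)$ with $p\nmid v$ then forces $p\mid N^m-1$, hence $\mathrm{Ord}_N(p)\le c\,d^s p^s$. This is exactly what ties the hypothesis $\#B<N^{\delta}$ (which gives $s<\delta$) to the order threshold $\mathrm{Ord}_N(p)>p^{\delta}$, and without it your claim that the bad primes form a finite or thin set is unsupported --- nothing in your sketch explains why the dimension of $K(N,L)$, rather than some fixed finite data, governs which primes are good.

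A second, smaller but real error is in the number-theoretic inputs. For part (ii) you propose to use only the prime number theorem in arithmetic progressions or the infinitude of primes outside a finite set; in fact one needs Erd\H{o}s's theorem that $\mathrm{Ord}_N(p)>p^{1/2}$ for a density-one set of primes, which is a nontrivial result and is exactly what the hypothesis $\#B<N^{1/2}$ is calibrated to. For part (i) the needed input is that $\mathrm{Ord}_N(p)>p^{0.677}$ on a set of primes of positive lower density, obtained by combining Baker--Harman's bound on $P^+(p-1)$ with Goldfeld's lemma transferring the large prime factor of $p-1$ into $\mathrm{Ord}_N(p)$; you cite the right circle of ideas but attach the $p^{1/2}$ threshold to part (i) and essentially nothing to part (ii), which inverts the actual logic (larger dimension bound $0.677$ requires the stronger order condition and yields only positive density, while the $1/2$ case yields density one).
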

\begin{remark}
  It is worth mentioning that Theorem \ref{thm-main-1} (ii) for $N$-Bernoulli convolutions has been obtained by Chi, He and Wu \cite{Chi-He-Wu-2025}.
\end{remark}

When $\# B$ is close to $N$, we present a conditional proof of \textbf{Conjecture \ref{C-3}}.
The details of Artin's primitive root conjecture and the Elliott–Halberstam conjecture are given in Section \ref{sec:connection}.

\begin{theorem}\label{thm:conditional}
  Under Artin's primitive root conjecture or the Elliott–Halberstam conjecture, \textbf{Conjecture \ref{C-3}} holds for singular continuous canonical spectral pairs generated by a Hadamard triple in $\R$.
\end{theorem}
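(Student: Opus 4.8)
The plan is to reduce \textbf{Conjecture \ref{C-3}} for a singular continuous canonical spectral pair $(\mu_{N,B},\Lambda)$ to a statement about the existence of infinitely many primes $p$ satisfying a congruence/order condition relative to $N$, and then invoke Artin's primitive root conjecture (or the Elliott--Halberstam conjecture) to produce such primes. First I would recall the mechanism, already visible in the abstract and in the $\#B<N^{0.677}$ case of Theorem \ref{thm-main-1}, by which a prime $p$ becomes a spectral eigenvalue: one shows that $(\mu_{N,B},p\Lambda)$ is again a spectral pair by checking, via \textbf{Theorem A}, that the scaled set $p\Lambda$ arises as the smallest set containing $-C$ over all $m_B$-cycles and closed under $x\mapsto Nx+\ell$. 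The key point is that multiplication by $p$ interacts well with the IFS $x\mapsto (x+\ell)/N$ precisely when $p$ is invertible modulo $N$ (so that $\gcd(p,N)=1$), and moreover when the map it induces on $\Z/N\Z$ — or on the relevant cyclic quotient controlling the digits $L$ and the cycle structure — behaves compatibly; the cleanest sufficient condition, which the paper's number-theoretic setup is built around, is that $p$ is a primitive root modulo $N$ (equivalently, $p$ generates $(\Z/N\Z)^\times$, which forces $N$ to be of the form for which primitive roots exist, with a separate easy argument otherwise), or more generally that the multiplicative order of $p$ modulo $N$ meets a lower bound. So the reduction step is: it suffices to exhibit infinitely many primes $p$ with $\gcd(p,N)=1$ whose order modulo $N$ is ``large enough'' in the sense dictated by the cycle combinatorics of $(N,B,L)$.

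Next I would handle the two conditional hypotheses in parallel. Under Artin's primitive root conjecture: for any fixed non-square integer $a\neq -1$ there are infinitely many primes $p$ having $a$ as a primitive root modulo $p$. Here the roles are reversed — I want $p$ primitive modulo $N$, not $N$ primitive modulo $p$ — so I would instead appeal to the (generalized) Artin conjecture in the form asserting that the set of primes $p$ for which $p$ has prescribed large order modulo a fixed modulus $N$ has positive relative density; equivalently, using the standard reduction, the set of primes $p$ that are primitive roots modulo $N$ (when $N$ admits primitive roots) has a positive density computable in terms of Euler's totient and is in particular infinite. When $N$ does not admit a primitive root, one applies the same heuristic to the largest cyclic factor of $(\Z/N\Z)^\times$, and the cycle analysis shows this weaker largeness still suffices. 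Under the Elliott--Halberstam conjecture instead, I would use the strengthened Bombieri--Vinogradov-type equidistribution of primes in arithmetic progressions to moduli up to $x^{1-\ep}$: this lets one control $\#\{p\le x: p\equiv a\ (\mathrm{mod}\ q)\}$ for $q$ growing with $x$, and thereby run a sieve showing that for a positive proportion of primes $p$ the order of $p$ modulo $N$ is not too small (one bounds the primes of small order $d$ by counting $p$ in the union of progressions cut out by $p^d\equiv 1$, whose total modulus is controlled). Either way the output is: infinitely many admissible primes $p$, hence infinitely many prime spectral eigenvalues, hence \textbf{Conjecture \ref{C-3}} for $(\mu_{N,B},\Lambda)$.

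Finally I would assemble the pieces: fix the Hadamard triple $(N,B,L)$ with $0\in B\cap L$ and $\#B<N$ (singular continuous case); extract from the cycle structure the explicit integer threshold $d_0=d_0(N,B,L)$ such that any prime $p\nmid N$ with $\mathrm{ord}_N(p)>d_0$ (or $p$ a primitive root mod the relevant factor) yields $(\mu_{N,B},p\Lambda)$ spectral; then quote the conditional number-theoretic input to get infinitely many such $p$; conclude. The main obstacle I expect is the reduction step itself — verifying that the order condition on $p$ really does guarantee $p\Lambda$ is a spectrum. This requires understanding how scaling by $p$ transforms $m_B$-cycles into $m_B$-cycles (or at least into a set generating the same closure under $N\Lambda+L$), which in turn hinges on the fact that the admissible digits and the cycle points live in a finite cyclic structure on which $p$ acts by an automorphism of large order; controlling the possible ``new'' extreme cycles that could appear after scaling, and ruling out the loss of orthogonality or completeness, is the delicate part. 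The number theory, by contrast, is a clean citation once the threshold $d_0$ is isolated, since both Artin's conjecture and Elliott--Halberstam are precisely strong enough to beat any fixed $d_0$.
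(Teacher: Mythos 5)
There is a genuine gap, and it sits exactly where you yourself flagged the risk: the reduction step. You reduce the problem to exhibiting infinitely many primes $p$ whose order \emph{modulo the fixed base $N$} is large (ideally $p$ a primitive root mod $N$, or mod the largest cyclic factor of $(\Z/N\Z)^\times$), with a fixed threshold $d_0=d_0(N,B,L)$. This is the wrong condition, with the roles of $p$ and $N$ reversed. The paper's criterion (Proposition \ref{prop:eigenvalue-equivalent} plus Proposition \ref{prop:large-eigenvalue}) is that the order of $N$ \emph{modulo $p$} exceeds $p^{\delta}$ for some $\delta>\dim_{\mathrm H}K(N,L)$: if $p$ failed to be an eigenvalue, there would be a rational $v/(pu)\in K(N,L)$, whose eventually periodic coding (Lemma \ref{lemma:period}) has period $m\le c d^s p^s$, forcing $p\mid N^m-1$ and hence $\mathrm{Ord}_N(p)\le cd^sp^s$, a contradiction. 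Two consequences for your plan: first, the threshold is not a fixed integer $d_0$ but grows like $p^{\delta}$ with $\delta$ possibly arbitrarily close to $1$ (since $\#B$ may be close to $N$), so "any fixed $d_0$" is not what the conditional hypotheses are needed to beat; second, a condition on the order of $p$ mod $N$ is a congruence condition on $p$ modulo $N$, and primes in any such residue class exist in abundance unconditionally by Dirichlet — if your reduction were correct the theorem would not be conditional at all, which is a strong signal the reduction cannot work as stated. Indeed, whether $p\Lambda$ is a spectrum is governed by the base-$N$ expansions of rationals with denominator divisible by $p$, i.e.\ by $\mathrm{Ord}_N(p)$, which is not determined by the residue of $p$ mod $N$.

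Moreover, the reduction itself is only sketched ("the cycle analysis shows this weaker largeness still suffices"), whereas it is the substantive content: the paper's route does not analyze how scaling transforms $m_B$-cycles at all, but instead uses the explicit construction of the canonical spectrum (Proposition \ref{prop:construct-spectrum}), the equivalence $q\Lambda$ spectral $\iff K(N,L)\cap\frac{\Z}{qd}=K(N,L)\cap\frac{\Z}{d}$, and the periodic-coding bound. With the correct orientation, the conditional inputs are then used exactly as in the paper: Artin's conjecture applied to $a$ with $N=a^{2^k}$ gives infinitely many $p$ with $\mathrm{Ord}_N(p)\ge (p-1)/2^k$, so $A_N(\delta)$ is infinite for every $\delta<1$; under Elliott--Halberstam one gets a positive proportion of primes with $P^+(p-1)>x^{\delta}$ (Wang's lemma), which Goldfeld's Lemma \ref{lemma:Goldfeld-1969} converts into a large divisor of $\mathrm{Ord}_N(p)$. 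Your proposed sieve for "order of $p$ mod $N$ not too small" under EH is likewise aimed at the wrong quantity. To repair the proposal you would need to replace your reduction by (or reprove) the rational-points/periodicity criterion and restate the arithmetic condition as $\mathrm{Ord}_N(p)>p^{\delta}$.
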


For composite numbers to be spectral eigenvalues, we give the following sufficient condition.
Analogous results for some special spectral pairs also appear in \cite{Dutkay-Haussermann-2016,Dutkay-Kraus-2018,Chi-He-Wu-2025}.

\begin{theorem}\label{thm-main-2}
  Let $(\mu_{N,B}, \Lambda)$ be the canonical spectral pair generated by a Hadamard triple $(N,B,L)$ in $\R$ with $0\in B \cap L$ and $\# B < N$.
  Let $p_1, p_2, \ldots, p_k$ be distinct primes such that $\gcd(p_1 p_2 \cdots p_k, N) =1$.
  Then there exists a large enough integer $n_0$, depending on $(N,B,L)$ and $p_1, p_2, \ldots, p_k$, such that if $(p_1 p_2 \cdots p_k)^{n_0}$ is a spectral eigenvalue of $(\mu_{N,B}, \Lambda)$ then $p_1^{n_1} p_2^{n_2} \cdots p_k^{n_k}$ is a spectral eigenvalue of $(\mu_{N,B}, \Lambda)$ for all non-negative integers $n_1, n_2, \ldots, n_k$.
\end{theorem}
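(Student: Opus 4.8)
The plan is to reduce the assertion to a \emph{finitary and multiplicative} criterion for an integer to be a spectral eigenvalue, and then to exploit its monotone dependence on a bounded amount of arithmetic data attached to that integer. Throughout write $s=p_1 p_2\cdots p_k$; since $\gcd(s,N)=1$ by hypothesis, every candidate $p_1^{n_1}\cdots p_k^{n_k}$ is coprime to $N$, so we stay inside the regime $\#B<N$, $\gcd(t,N)=1$ in which the whole discussion takes place.

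\emph{Step 1 (a finitary criterion).} First I would set up a criterion $(\mathrm E_t)$ describing, for $t\in\N$ with $\gcd(t,N)=1$, when $t\Lambda$ is a spectrum of $\mu_{N,B}$. By the Jorgensen–Pedersen test this amounts to completeness of the family $\{e_{t\lambda}:\lambda\in\Lambda\}$, equivalently $Q_{t\Lambda}(\xi):=\sum_{\lambda\in\Lambda}|\widehat{\mu_{N,B}}(\xi+t\lambda)|^{2}\equiv 1$ — and one must not forget that even the orthonormality of $\{e_{t\lambda}\}$ is a genuine constraint, namely that $\widehat{\mu_{N,B}}$ vanish on $t(\Lambda-\Lambda)$. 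Plugging in the product formula $\widehat{\mu_{N,B}}(\xi)=\prod_{n\ge1}m_B(\xi/N^{n})$ together with the cycle description of $\Lambda$ from Theorem A, and rerunning the ``no loss of mass'' analysis that underlies Theorem A, one is led to a criterion of the following shape: $t$ is a spectral eigenvalue of $(\mu_{N,B},\Lambda)$ if and only if a finite list of conditions holds, one for each $m_B$-cycle $C$ of $(N,B,L)$. There are only finitely many such cycles, since each cycle element lies in the finite set
\[ \tfrac1d\Z\cap\Big[-\tfrac{\max\{|\ell|:\ell\in L\}}{N-1},\ \tfrac{\max\{|\ell|:\ell\in L\}}{N-1}\Big],\qquad d=\gcd\{b-b':b,b'\in B\}; \]
moreover the condition attached to a cycle $C$ of length $m=m(C)$ depends on $t$ only through the class of $N$ in $(\Z/t\Z)^{\times}$ — in practice only through the divisibilities $t\mid N^{jm}-1$, i.e.\ through $\gcd(t,N^{m}-1)$ and the order $\mathrm{ord}_{t}(N)$.

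\emph{Step 2 (multiplicativity and monotonicity).} Two structural properties of $(\mathrm E_t)$ do the work. First, \emph{multiplicativity over coprime factors}: if $t=t_1 t_2$ with $\gcd(t_1,t_2)=1$ and $\gcd(t,N)=1$, then $(\mathrm E_t)$ holds if and only if $(\mathrm E_{t_1})$ and $(\mathrm E_{t_2})$ both hold — this follows from $\gcd(t_1 t_2,n)=\gcd(t_1,n)\gcd(t_2,n)$ and from the Chinese Remainder Theorem splitting of the cycle-wise obstruction into its $t_1$-part and its $t_2$-part. Second, \emph{monotonicity under division}: if $t'\mid t$ and $\gcd(t,N)=1$, then $(\mathrm E_t)$ implies $(\mathrm E_{t'})$, because $\gcd(t',N^{m}-1)\mid\gcd(t,N^{m}-1)$ for every $m$ and the cycle-wise condition only becomes harder to satisfy as that gcd grows. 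Proving these two properties — in particular pinning down that the cycle-wise obstruction is genuinely governed by $\gcd(t,N^{m}-1)$ and genuinely monotone in it — is the technical core of the argument and the step I expect to be the main obstacle; everything afterwards is bookkeeping.

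\emph{Step 3 (stabilization and conclusion).} For a prime $p$ with $\gcd(p,N)=1$ put $n^{*}_{p}:=\max_{C}v_{p}(N^{m(C)}-1)$, the maximum over the finitely many $m_B$-cycles $C$, where $v_p$ denotes the $p$-adic valuation. For $n\ge n^{*}_{p}$ one has $\gcd(p^{n},N^{m(C)}-1)=p^{\,v_{p}(N^{m(C)}-1)}$ for every $C$, independently of $n$, so by Step 1 the conditions $(\mathrm E_{p^{n}})$ coincide for all $n\ge n^{*}_{p}$. Combining this \emph{stabilization} with the downward monotonicity of Step 2 (applied to $p^{n}\mid p^{n+1}$), the set $\{n\ge 0:(\mathrm E_{p^{n}})\text{ holds}\}$ is downward closed and, as it contains $0$, it cannot be a finite initial segment reaching $n^{*}_{p}$; hence it is either the set of all non-negative integers or $\{0,1,\dots,M\}$ with $M<n^{*}_{p}$. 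Now set $n_0:=\max_{1\le i\le k}n^{*}_{p_i}$ (any larger value is equally good); this depends only on $(N,B,L)$ and on $p_1,\dots,p_k$. Assume $(p_1\cdots p_k)^{n_0}=s^{n_0}$ is a spectral eigenvalue, i.e.\ $(\mathrm E_{s^{n_0}})$ holds. By multiplicativity, $(\mathrm E_{p_i^{n_0}})$ holds for each $i$; since $n_0\ge n^{*}_{p_i}$, the dichotomy forces $(\mathrm E_{p_i^{n}})$ to hold for \emph{every} non-negative integer $n$. Finally, for arbitrary non-negative integers $n_1,\dots,n_k$, one more application of multiplicativity shows that $(\mathrm E_{p_1^{n_1}\cdots p_k^{n_k}})$ holds, i.e.\ $p_1^{n_1}\cdots p_k^{n_k}$ is a spectral eigenvalue of $(\mu_{N,B},\Lambda)$, which is exactly the conclusion sought.
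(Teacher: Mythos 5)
Your Step 1 criterion is asserted rather than established, and it is where the argument breaks. The correct characterization (the paper's Proposition 3.1, proved via Lemma 3.2 and the explicit construction of the canonical spectrum) is that for $\gcd(q,N)=1$, the integer $q$ is a spectral eigenvalue if and only if $K(N,L)\cap\frac{\Z}{qd}=K(N,L)\cap\frac{\Z}{d}$. The obstruction is therefore governed by \emph{all} rational points of the self-similar set $K(N,L)$ whose denominators involve $q$; such points have eventually periodic codings whose period lengths are tied to $\mathrm{Ord}_N$ modulo divisors of $q$ and grow with $q$, and they are not indexed by the finitely many $m_B$-cycles of the original triple $(N,B,L)$. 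So the condition does not reduce to divisibilities $t\mid N^{m(C)}-1$ over those cycles, and your stabilization constant $n^{*}_{p}=\max_{C}v_{p}(N^{m(C)}-1)$ has no justification: there is no reason the conditions $(\mathrm E_{p^{n}})$ coincide for $n\ge n^{*}_{p}$. In the paper this stabilization is exactly the nontrivial input: it comes from the finiteness theorem $\#\big(D_p\cap K(N,dL)\big)<+\f$ (Theorem 2.2, imported from Kong--Li--Wang), which yields a (non-explicit) $n_0$ with $K(N,dL)\cap D_p=K(N,dL)\cap\frac{\Z}{p^{n_0}}$.

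Second, the coprime multiplicativity you invoke at the end --- from $(\mathrm E_{p_i^{n_i}})$ for every $i$ deduce $(\mathrm E_{p_1^{n_1}\cdots p_k^{n_k}})$ --- is unproven and is not a formal consequence of the Chinese Remainder Theorem: in terms of the correct criterion, a point $x=v/(p_1p_2u)\in K(N,L)$ with $\gcd(v,p_1p_2u)=1$ could exist even though $K(N,L)$ contains no rational point whose denominator involves $p_1$ alone or $p_2$ alone, so the obstruction need not split over coprime factors. (The downward direction, $t'\mid t$ with $t$ an eigenvalue implies $t'$ is one, is indeed immediate from the criterion.) The paper sidesteps this entirely by running the argument for the product $p=p_1\cdots p_k$ at once: once $K(N,dL)\cap D_p=K(N,dL)\cap\Z$, every denominator $p_1^{n_1}\cdots p_k^{n_k}$ is handled simultaneously because $\Z\subset\frac{\Z}{p_1^{n_1}\cdots p_k^{n_k}}\subset D_p$. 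You yourself flag Step 2 as the expected main obstacle; as written, both it and the claimed shape of the Step 1 criterion are genuine gaps, so the proposal does not yet constitute a proof.
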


Let $(N,B,L)$ be a Hadamard triple in $\R$ with $0\in B \cap L$ and $N>\#B \ge 2$.
If $\{0\}$ is the only $m_B$-cycle, then the canonical spectrum in \textbf{Theorem A} is given by
\begin{equation}\label{eq:Lambda-N-L}
  \Lambda_{N,L} = \bigg\{ \sum_{k=0}^{n} \ell_k N^k:\; n \in \N,\; \ell_0, \ell_1, \ldots,\ell_n \in L  \bigg\},
\end{equation}
which happens if $\gcd(B)=1$ and $L \subset [2-N,N-2]$ \cite[Theorem 1.2]{Laba-Wang-2002}.
Applying Theorem \ref{thm-main-1} (i) to the case that $\#B =2,3,4$, we obtain the following corollary.
\begin{corollary}\label{cor:2-3-4}
  There are infinitely many prime spectral eigenvalues of $(\mu_{N,B},\Lambda_{N,L})$, where $\mu_{N,B}$ is the self-similar measure and $\Lambda_{N,L}$ is defined in (\ref{eq:Lambda-N-L}), for all following cases.

  {\rm(i)} $N=2q$ with $q \in \N_{\ge 2}$, $B=\{0,1\}$, and $L=\{ 0,q \} \;\text{or}\; \{-q,0\}$.

  {\rm(ii)} $N=3q$ with $q \in \N_{\ge 2}$, $B=\{0,a,b\} \subset \Z$ with $\gcd(a,b)=1$ and $\{a,b\} \equiv \{1,2\} \pmod{3}$, and $L=\{0,q,2q\},\;\{-q,0,q\},\;\text{or}\;\{-2q,-q,0\}$.

  {\rm(iii)} $B=\big\{ 0, a, 2^m b, a+2^m b' \big\}$ where $m \in \N$, $a,b,b' \in \N$ are odd integers and $\gcd(a,b,b')=1$, $N \in \N_{\ge 8}$ with $2^{m+1} \mid N$, and $L=\frac{N}{2^{m+1}}\big\{ 0, 1, 2^m, 1+2^m \big\}$.
\end{corollary}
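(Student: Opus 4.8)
The plan is to exhibit each of the three families as an instance of \textbf{Theorem \ref{thm-main-1}} (i) applied to the canonical spectral pair $(\mu_{N,B},\Lambda_{N,L})$. Concretely, for every listed $(N,B,L)$ one must verify: (a) $(N,B,L)$ is a Hadamard triple with $0\in B\cap L$; (b) $\gcd(B)=1$ and $L\subset[2-N,N-2]$, so that by the consequence of \cite[Theorem 1.2]{Laba-Wang-2002} recalled above, $\{0\}$ is the only $m_B$-cycle and the canonical spectrum is exactly the set $\Lambda_{N,L}$ of (\ref{eq:Lambda-N-L}); and (c) $\#B<N^{0.677}$. Given (a)–(c), \textbf{Theorem \ref{thm-main-1}} (i) immediately yields infinitely many prime spectral eigenvalues of $(\mu_{N,B},\Lambda_{N,L})$.

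For (a), the matrix (\ref{eq:matrix}) is computed directly. In (i), $N=2q$ forces $e^{2\pi\mathrm{i}\cdot 1\cdot q/N}=e^{\pi\mathrm{i}}=-1$, so the matrix is, up to a column permutation, the rescaled $2\times2$ Fourier matrix, hence unitary. In (ii), with $N=3q$ the matrix is, up to a permutation of columns, the one whose rows indexed by $0,a,b$ read $(1,1,1)$, $(1,\omega^{a},\omega^{2a})$, $(1,\omega^{b},\omega^{2b})$ with $\omega=e^{2\pi\mathrm{i}/3}$; since $\{a,b\}\equiv\{1,2\}\pmod 3$ these are the three rows of the $3\times3$ DFT matrix in some order, so the matrix is unitary. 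In (iii), put $q=N/2^{m+1}\in\Z$ and $\zeta=e^{2\pi\mathrm{i}/2^{m+1}}$; the row of (\ref{eq:matrix}) attached to $b''\in B$ equals $\tfrac12\big(1,\zeta^{b''},(-1)^{b''},(-1)^{b''}\zeta^{b''}\big)$, and since $m\ge1$ and $a,b,b'$ are odd (so $\zeta^{2^m b}=\zeta^{2^m b'}=-1$ while $2^m b,2^m b'$ are even, which also makes the four elements of $B$, and the four of $L$, pairwise distinct) these rows simplify to $(1,1,1,1)$, $(1,\zeta^{a},-1,-\zeta^{a})$, $(1,-1,1,-1)$, $(1,-\zeta^{a},-1,\zeta^{a})$; all six pairwise Hermitian inner products vanish and each row has squared length $4$, so (\ref{eq:matrix}) is unitary. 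In every case $0\in B\cap L$ is evident.

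For (b), $\gcd(0,1)=1$ in (i) and $\gcd(0,a,b)=\gcd(a,b)=1$ in (ii); in (iii), any common divisor $d$ of $a$, $2^m b$, $a+2^m b'$ divides $2^m b'$ as well, hence divides $\gcd(a,2^m b,2^m b')$, and being a divisor of the odd number $a$ it is coprime to $2^m$ and so divides $\gcd(a,b,b')=1$. For the inclusion $L\subset[2-N,N-2]$, the elements of $L$ lie in a half-line through $0$, so it suffices to bound the element of largest modulus: $q\le 2q-2$ in (i) and $2q\le 3q-2$ in (ii) hold because $q\ge2$, and in (iii) the largest element $(1+2^m)q=(1+2^m)N/2^{m+1}$ satisfies $(1+2^m)q\le N-2$ iff $N(2^m-1)/2^{m+1}\ge2$, which holds as $N\ge8$ when $m=1$ and $N(2^m-1)/2^{m+1}\ge 2^m-1\ge3$ when $m\ge2$.

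For (c), $\#B=2$ with $N=2q\ge4$ in (i), $\#B=3$ with $N=3q\ge6$ in (ii), $\#B=4$ with $N\ge8$ in (iii); since $4^{0.677}>2$, $6^{0.677}>3$, $8^{0.677}>4$, and $N\mapsto N^{0.677}$ is increasing, we get $\#B<N^{0.677}$ in all three cases, and \textbf{Theorem \ref{thm-main-1}} (i) finishes the proof. There is no substantive obstacle: the corollary is a specialization of \textbf{Theorem \ref{thm-main-1}} (i), and the only step needing genuine care is the unitarity verification in case (iii), where no Fourier-matrix shortcut is available and one must exploit the $2$-adic arithmetic of $B$ (the oddness of $a,b,b'$) together with the factor $N/2^{m+1}$ built into $L$.
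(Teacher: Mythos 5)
Your proposal is correct and matches the paper's (implicit) argument: the corollary is stated as a direct specialization of Theorem \ref{thm-main-1}~(i), using the quoted consequence of \cite[Theorem 1.2]{Laba-Wang-2002} (namely $\gcd(B)=1$ and $L\subset[2-N,N-2]$ force the canonical spectrum to be $\Lambda_{N,L}$) together with $\#B=2,3,4<N^{0.677}$ for $N\ge 4,6,8$. You simply spell out the Hadamard-triple and $\gcd$/range verifications that the paper leaves to the reader, and these checks (including the $4\times4$ unitarity computation in case (iii)) are accurate.
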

\begin{remark}
  We remark that Corollary \ref{cor:2-3-4} (ii) offers an affirmative answer to Question 6.3 in \cite{Dai-Fu-He-2026}.
\end{remark}

The rest of paper is organized as follows.
In Section \ref{preliminary}, we give some needed results for subsequent proofs.
We prove Theorems \ref{thm-main-1} and \ref{thm-main-2} in Section \ref{sec:proof}.
In the last section, we discuss Artin's primitive root conjecture and the Elliott–Halberstam conjecture, and prove Theorem \ref{thm:conditional}.

\section{Preliminaries}\label{preliminary}

In this section, we collect some known results for subsequent proofs.

\subsection{Rational points in Cantor sets}
Let $q \in \N_{\ge 3}$, and let $A \subset \Q$ be a finite set.
Consider the IFS \[{\mathcal F}_{q,A}=\bigg\{ f_a(x)=\frac{x+a}{q}: a\in A \bigg\},\]
and by Hutchinson \cite{Hutchinson-1981}, there exists a unique non-empty compact set $K(q,A)$ in $\R$, which is called a \emph{self-similar set}, such that \[ K(q,A) = \bigcup_{a \in A} f_a\big( K(q,A) \big). \]
The set $K(q, A)$ can be written algebraically as
\[ K(q,A)=\bigg\{\sum_{k=1}^{\infty} \frac{a_k}{q^{k}}: \;a_k \in A ~~\forall k\in \N\bigg\}.\]
Every point $x \in K(q,A)$ has at least one expression of the form $x = \sum_{k=1}^{\infty} a_k q^{-k}$ with each $a_k \in A$. The infinite sequence $(a_k)_{k=1}^\f \in A^\N$ is called a \emph{coding} of $x$.
Let $\dim_{\mathrm{H}} E$ denote the Hausdorff dimension of a set $E$.

\begin{lemma}\cite[Lemma 2.3]{Kong-Li-Wang-2025}\label{lemma:period}
Let $q\in\N_{\ge 3}$ and let $A  \subset \mathbb Q$ be a finite set. If $s=\dim_{\mathrm{H}}K(q, A) <1 $, then there exists a constant $c>0$ such that any rational number $\xi = v/u \in K(q, A)$ with $u \in \N$, $v \in \Z$ and $\gcd(v,u)=1$ admits an eventually periodic coding with the periodic length $\le c u^{s}.$
\end{lemma}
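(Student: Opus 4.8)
\emph{Proof sketch.} The plan is to convert the statement into a counting estimate for rationals of bounded denominator in $K(q,A)$ and then to exploit self-similarity. First I would normalise to the integer case: choosing $M\in\N$ with $MA\subset\Z$, one has $K(q,MA)=M\cdot K(q,A)$, $\dim_{\mathrm{H}}K(q,MA)=s$, and the codings of $\xi\in K(q,A)$ and of $M\xi\in K(q,MA)$ correspond under the bijection $a\mapsto Ma$, hence have equal period lengths; since the denominator of $M\xi$ in lowest terms divides $u$, it is enough to prove the lemma for $A\subset\Z$. Assume this, and write $N(u):=\#\big(K(q,A)\cap\tfrac1u\Z\big)$.

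Next, the dynamical step. Given $\xi=v/u\in K(q,A)$, I would use $K(q,A)=\bigcup_{a\in A}f_a(K(q,A))$ to choose recursively $a_k\in A$ and $\xi^{(k)}:=q\,\xi^{(k-1)}-a_k\in K(q,A)$ with $\xi^{(0)}:=\xi$; since $\xi=f_{a_1}\circ\cdots\circ f_{a_n}(\xi^{(n)})$ and the $\xi^{(n)}$ remain in the bounded set $K(q,A)$, the sequence $(a_k)_{k\ge1}$ is a coding of $\xi$. Because $\xi^{(k)}=w_k/u$ with $w_k=q\,w_{k-1}-a_k u\in\Z$, every $\xi^{(k)}$ lies in $K(q,A)\cap\tfrac1u\Z$, so by pigeonhole $\xi^{(i)}=\xi^{(j)}$ for some $0\le i<j\le N(u)$. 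Then $\xi^{(i)}$ is the fixed point of the contraction $f_{a_{i+1}}\circ\cdots\circ f_{a_j}$ (as $\xi^{(i)}=f_{a_{i+1}}\circ\cdots\circ f_{a_j}(\xi^{(j)})=f_{a_{i+1}}\circ\cdots\circ f_{a_j}(\xi^{(i)})$), and therefore $a_1,\dots,a_i,\overline{a_{i+1},\dots,a_j}$ is an eventually periodic coding of $\xi$ with period length $j-i\le N(u)$.

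For the counting step I would show $N(u)\le c\,u^{s}$. Writing $D_n=\{\sum_{i=1}^n a_iq^{-i}:a_i\in A\}$, so that $K(q,A)=\bigcup_{t\in D_n}\big(q^{-n}K(q,A)+t\big)$ is covered by $\#D_n$ pieces of diameter $q^{-n}\diam K(q,A)$, I take $n=n(u)$ minimal with this diameter $<1/u$ (so $q^{n(u)}\le q\,u\,\diam K(q,A)$); then each piece meets $\tfrac1u\Z$ at most once and $N(u)\le\#D_{n(u)}$. It remains to bound $\#D_n$. The sequence $\#D_n$ is submultiplicative and almost supermultiplicative — a coincidence $t_1+q^{-n}t_1'=t_2+q^{-n}t_2'$ with $t_i\in D_n$, $t_i'\in D_m$ forces $t_2'-t_1'=q^n(t_1-t_2)$ to be an integer lying in the fixed bounded set $D_m-D_m$, so $\#D_{n+m}\ge\#D_n\#D_m/C_0$ for some constant $C_0=C_0(q,A)$ — whence $\lambda^n\le\#D_n\le C_0\lambda^n$ with $\lambda:=\lim_n(\#D_n)^{1/n}$. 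The upper bound gives a cover of $K(q,A)$ by $\le C_0\lambda^n$ intervals of length $\asymp q^{-n}$ and the lower bound provides the $q^{-n}$-separated subset $D_n+q^{-n}z\subset K(q,A)$ (any fixed $z$) of size $\ge\lambda^n$, so $\dim_{\mathrm{B}}K(q,A)$ exists and equals $\log\lambda/\log q$. Invoking the equality $\dim_{\mathrm{H}}=\dim_{\mathrm{B}}$ for self-similar subsets of $\R$, this common value is $s$, hence $\lambda=q^{s}$ and
\[
N(u)\le\#D_{n(u)}\le C_0\lambda^{n(u)}=C_0\,q^{n(u)s}\le C_0\big(q\,\diam K(q,A)\big)^{s}u^{s},
\]
which is the lemma with $c=C_0\big(q\,\diam K(q,A)\big)^{s}$.

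I expect the main obstacle to be the last ingredient: the bound $\#D_n\le C\,q^{ns}$ with the exponent equal to the \emph{Hausdorff} dimension, i.e. the coincidence $\dim_{\mathrm{H}}K(q,A)=\dim_{\mathrm{B}}K(q,A)$. If the IFS has no exact overlaps this is immediate, since then $\#D_n=(\#A)^n$ and $s=\log\#A/\log q$ by the theory of self-similar sets with rational data; but when exact overlaps occur one must pass to an iterated, reduced IFS and argue that no dimension is lost, and it is this — not the two soft pigeonhole reductions — where the real work lies.
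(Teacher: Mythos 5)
You have effectively had to reconstruct a proof the paper never gives: Lemma \ref{lemma:period} is imported verbatim from Kong--Li--Wang (2025, Lemma 2.3), so there is no in-paper argument to compare against. Judged on its own, your proof is correct and complete modulo one standard citation. The reduction to $A\subset\Z$, the orbit construction $\xi^{(k)}=q\xi^{(k-1)}-a_k$ remaining in $K(q,A)\cap\frac{1}{u}\Z$, and the pigeonhole producing an eventually periodic coding with period at most $N(u)=\#\big(K(q,A)\cap\frac{1}{u}\Z\big)$ are all sound, as is the estimate $N(u)\le\#D_{n(u)}$ with $q^{n(u)}\le qu\,\diam K$ (each level-$n(u)$ cylinder has diameter $<1/u$, hence meets $\frac1u\Z$ at most once); note that the $q^{-n}$-separation of $D_n$, which your packing lower bound needs, is exactly where the integrality of the digits after the reduction is used, and you do have it since $D_n\subset q^{-n}\Z$. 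For the crux $\#D_n\le C_0q^{ns}$, your sub- plus near-super-multiplicativity argument correctly pins $\#D_n$ between $\lambda^n$ and $C_0\lambda^n$ and identifies $\log\lambda/\log q$ with the box dimension; the only external input is then $\dim_{\mathrm{H}}K=\dim_{\mathrm{B}}K$, and here the worry in your final paragraph is unfounded: this equality holds for \emph{every} self-similar set, with or without (exact) overlaps, by Falconer's quasi self-similarity (implicit methods) theorem, so no passage to a reduced IFS and no ``no dimension is lost'' argument is required. One side remark should be corrected rather than relied on: in the no-exact-overlaps case, $\#D_n=(\#A)^n$ is indeed immediate, but $s=\log\#A/\log q$ is not ``immediate . . . by the theory of self-similar sets with rational data'' unless the pieces are actually disjoint --- in the overlapping case that identity is Hochman's exact-overlaps theorem, a far deeper input than Falconer's; fortunately your main line never uses it. Apart from spelling out the trivial degenerate cases ($\#A=1$, or $n(u)=0$, absorbed into the constant $c$), the argument stands as written.
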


For $p\in \N_{\ge 2}$, let $D_p$ be the set of all rational numbers in $\mathbb {R}$ having a finite $p$-ary expansion. That is,
\[ D_p=\bigcup_{k=0}^\f \frac{\mathbb {Z}}{p^k}, \]
which is a proper subset of $\mathbb Q$ and is dense in $\mathbb R$.

\begin{theorem}\cite[Theorem 1.1]{Kong-Li-Wang-2025}\label{thm:finiteness}
  Let $q\in\N_{\ge 3}$ and let $A  \subset \mathbb Q$ be a finite set.
  If $\dim_{\mathrm{H}} K(q,A) < 1$, then for any $p \in \N_{\ge 2}$ with $\gcd(p,q) = 1$ we have \[ \# \big( D_p \cap K(q,A) \big) < +\f.\]
\end{theorem}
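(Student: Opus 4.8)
The plan is to show that the reduced denominator of every point of $D_p\cap K(q,A)$ is bounded by a constant depending only on $q$, $A$ and $p$; since $K(q,A)$ is compact, this immediately leaves only finitely many rationals. I would first reduce to the case $A\subset\Z$: if $D\in\N$ is a common denominator of $A$, then $K(q,DA)=D\cdot K(q,A)$ has the same Hausdorff dimension $s<1$, and multiplication by $D$ maps $D_p\cap K(q,A)$ injectively into $D_p\cap K(q,DA)$ (because $D_p$ is stable under multiplication by integers), so finiteness transfers. Assuming $A\subset\Z$, take $x\in D_p\cap K(q,A)$ and write $x=v/u$ with $\gcd(v,u)=1$. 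Since $x$ has a finite $p$-ary expansion, $u$ divides a power of $p$, so every prime factor of $u$ divides $p$; as $\gcd(p,q)=1$ this forces $\gcd(u,q)=1$. By Lemma~\ref{lemma:period}, $x$ admits an eventually periodic coding in base $q$ with digits in $A$ and period length $T\le c\,u^{s}$. Summing the resulting geometric series shows that $u$ divides $q^{n_0}(q^{T}-1)$, where $n_0$ is the pre-period; combined with $\gcd(u,q)=1$ this gives $u\mid q^{T}-1$, so the multiplicative order $\operatorname{ord}_u(q)$ divides $T$ and hence $\operatorname{ord}_u(q)\le c\,u^{s}$.

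The heart of the argument is a matching lower bound $\operatorname{ord}_u(q)\ge u/V$ for a constant $V$ depending only on $p$ and $q$. Write $u=\prod_{\ell\mid p}\ell^{e_\ell}$. For each prime $\ell$ dividing $p$ (so $\ell\nmid q$), the order of $q$ modulo $\ell^{e}$ equals $\operatorname{ord}_\ell(q)\cdot\ell^{\max(0,\,e-a_\ell)}$ for a constant $a_\ell=a_\ell(q)$ — this is the standard consequence of the lifting-the-exponent lemma, with the usual small modification when $\ell=2$. In particular $v_\ell\!\big(\operatorname{ord}_{\ell^{e_\ell}}(q)\big)\ge e_\ell-a_\ell$. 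Since $(\Z/u\Z)^\times\cong\prod_{\ell\mid p}(\Z/\ell^{e_\ell}\Z)^\times$, the order $\operatorname{ord}_u(q)$ is the least common multiple of the $\operatorname{ord}_{\ell^{e_\ell}}(q)$, so $\prod_{\ell\mid p}\ell^{\,e_\ell-a_\ell}$ divides $\operatorname{ord}_u(q)$; thus $\operatorname{ord}_u(q)\ge u/V$ with $V=\prod_{\ell\mid p}\ell^{a_\ell}$.

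Combining the two bounds yields $u^{1-s}\le cV$, and since $s<1$ this bounds $u$. As $K(q,A)$ is bounded, $|v|\le u\cdot\max_{y\in K(q,A)}|y|$ is then bounded as well, so $D_p\cap K(q,A)$ is a finite set; undoing the initial reduction gives the claim for arbitrary finite $A\subset\Q$.

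I expect the only real obstacle to be the order estimate $\operatorname{ord}_u(q)\ge u/V$: this is precisely where the coprimality $\gcd(p,q)=1$ and the constrained factorization of $u$ enter, and it is what rules out the "small order" denominators (such as $q^{T}-1$ itself) that would otherwise allow infinitely many rational points in $K(q,A)$. The remaining ingredients — reducing to integer digit sets, the geometric-series computation giving $u\mid q^{T}-1$, and the final counting — are routine once Lemma~\ref{lemma:period} is in hand.
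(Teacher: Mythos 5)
The paper does not reproduce a proof of this theorem --- it is imported from the cited reference --- so there is no in-paper argument to compare against; your proposal is correct and rests on exactly the strategy the paper itself uses in proving Proposition \ref{prop:large-eigenvalue}: reduce to integer digits, use Lemma \ref{lemma:period} to get a period length $T\le c\,u^{s}$ forcing $u\mid q^{T}-1$, and play this against the lower bound $\mathrm{ord}_u(q)\ge u/V$ coming from lifting the exponent at the primes dividing $p$ (legitimate since $\gcd(u,q)=1$), which bounds $u$ and hence, by compactness of $K(q,A)$, leaves only finitely many points. In short, your argument is sound and is essentially the same order-versus-period-length argument underlying the cited result.
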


\subsection{Canonical spectral pair construction}
The canonical spectrum $\Lambda$ in \textbf{Theorem A} can be constructed explicitly.
As before, let $K(N,L)$ be the self-similar set generated by the IFS $\mathcal{F}_{N,L}=\big\{ \tau_\ell(x) = (x+\ell)/N: \ell \in L \big\}$.
\begin{proposition}\cite[Proposition 3.6]{Kong-Li-Wang-2025}
\label{prop:construct-spectrum}
  Let $(N,B,L)$ be a Hadamard triple in $\R$ with $0\in B \cap L$ and write $d = \gcd(B)$.
  Let $\Lambda_0= -\big( K(N,L) \cap (\Z/d) \big)$ and $\Lambda_n = N \Lambda_{n-1} + L$ for $n \in \N$.
  Set $$\Lambda = \bigcup_{n=0}^\f \Lambda_n.$$
  Then $(\mu_{N,B},\Lambda)$ is the canonical spectral pair generated by the Hadamard triple $(N,B,L)$.
\end{proposition}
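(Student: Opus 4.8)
We must show that the set $\Lambda=\bigcup_{n\ge0}\Lambda_n$ of the statement coincides with the canonical spectrum $\Gamma$ furnished by \textbf{Theorem A}, i.e.\ the smallest subset of $\R$ with $-C\subseteq\Gamma$ for every $m_B$-cycle $C$ and $N\Gamma+L\subseteq\Gamma$. The inclusion $\Gamma\subseteq\Lambda$ is the easy direction: from $\Lambda_n=N\Lambda_{n-1}+L$ one gets $N\Lambda+L=\bigcup_{n\ge1}\Lambda_n\subseteq\Lambda$, so $\Lambda$ is stable under $\lambda\mapsto N\lambda+\ell$ ($\ell\in L$); and to obtain $-C\subseteq\Lambda$ for every $m_B$-cycle $C$ it suffices to check $C\subseteq K(N,L)\cap(\Z/d)$, since then $-C\subseteq\Lambda_0\subseteq\Lambda$. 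For the latter I would record that, because $0\in B$, the average $m_B(x)$ of the unit numbers $e^{2\pi\ii bx}$ has modulus $1$ only when all of them equal $e^{2\pi\ii\cdot0\cdot x}=1$; hence $|m_B(x)|=1$ forces $bx\in\Z$ for all $b\in B$, i.e.\ $dx\in\Z$, so every $m_B$-cycle lies in $\Z/d$; and a point of any cycle is the fixed point of a composition $\tau_{\ell_k}\circ\cdots\circ\tau_{\ell_1}$ of the contractions $\tau_\ell$, hence lies in $K(N,L)$. Minimality of $\Gamma$ then yields $\Gamma\subseteq\Lambda$.

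For the reverse inclusion I would first reduce it: since $\Gamma$ is stable under $\lambda\mapsto N\lambda+\ell$ and $\Lambda_n=N\Lambda_{n-1}+L$, induction on $n$ shows $\Lambda\subseteq\Gamma$ as soon as $\Lambda_0\subseteq\Gamma$, i.e.\ $-w\in\Gamma$ for every $w\in K(N,L)\cap(\Z/d)$. As $\Gamma$ already contains $-c$ for every point $c$ of an $m_B$-cycle, this comes down to the arithmetic statement that $K(N,L)\cap(\Z/d)$ equals the union of all $m_B$-cycles; the inclusion ``$\supseteq$'' is what was just shown. For ``$\subseteq$'', set $E:=K(N,L)\cap(\Z/d)$; since $K(N,L)$ is compact and $\Z/d$ is closed and discrete, $E$ is finite, and $E$ is forward invariant under digit extraction: if $w\in E$ has a coding $w=\sum_{k\ge1}m_kN^{-k}$, then $w^{(1)}:=Nw-m_1=\sum_{k\ge1}m_{k+1}N^{-k}\in K(N,L)$ with $d\,w^{(1)}=N\,dw-dm_1\in\Z$, so $w^{(1)}\in E$. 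Hence every orbit in $E$ is eventually periodic and each point of $E$ reaches a cycle, which, lying in $\Z/d$, is automatically an $m_B$-cycle.

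The remaining point — upgrading ``reaches a cycle'' to ``lies on a cycle'' — is where I expect the real work. The plan is to prove that digit extraction is a \emph{bijection} of the finite set $E$ (a bijective self-map of a finite set has all of its points periodic), equivalently that every $w\in E$ has an $L$-predecessor in $E$: some $\ell\in L$ with $\tau_\ell(w)=(w+\ell)/N\in\Z/d$, i.e.\ with $d\ell\equiv-dw\pmod N$. Existence of a \emph{child} is automatic, but existence of a \emph{predecessor} is not, and it is precisely here that the Hadamard hypothesis must enter: the unitarity of the matrix in \eqref{eq:matrix} links $L$ to $B$ tightly enough to force the congruence $d\ell\equiv-dw\pmod N$ to be solvable within $L$ for every $w\in E$; one then passes from ``every point of $E$ has a predecessor in $E$'' to ``digit extraction permutes $E$'' by a counting / matching argument, also dealing with the case where points of $K(N,L)$ admit several $L$-codings (overlaps in $\mathcal F_{N,L}$). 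Translating the unitarity of \eqref{eq:matrix} into this combinatorial statement is the main obstacle; everything else is bookkeeping.

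A second, essentially equivalent, route that I would keep in reserve is to verify directly that $\{e_\lambda:\lambda\in\Lambda\}$ is orthonormal in $L^2(\mu_{N,B})$ — the standard Hadamard-triple computation based on $\wh{\mu_{N,B}}(\xi)=\prod_{k\ge1}m_B(\xi/N^k)$ together with $\{\ell/N:\ell\in L\setminus\{0\}\}\subseteq Z(m_B)$, whose only nonroutine ingredient is that $w-w'\in Z(\wh{\mu_{N,B}})$ for distinct $w,w'\in K(N,L)\cap(\Z/d)$. Granting this, since $\Gamma\subseteq\Lambda$ and $\{e_\lambda:\lambda\in\Gamma\}$ is already a complete orthonormal system by \textbf{Theorem A}, the orthonormal system $\{e_\lambda:\lambda\in\Lambda\}$ cannot be strictly larger, and $\Lambda=\Gamma$ follows.
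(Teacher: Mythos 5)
Your reduction is set up correctly, and the parts you actually prove are fine: $\Gamma\subseteq\Lambda$ (every $m_B$-cycle lies in $K(N,L)\cap(\Z/d)$ because $0\in B$ forces $bx\in\Z$ for all $b\in B$, hence $dx\in\Z$, and cycle points are fixed points of compositions of the $\tau_\ell$), the reduction of $\Lambda\subseteq\Gamma$ to $\Lambda_0\subseteq\Gamma$, the finiteness and forward invariance of $E:=K(N,L)\cap(\Z/d)$ under digit extraction, and the observation that any cycle inside $\Z/d$ is automatically an $m_B$-cycle. (Note the paper itself offers no proof of this proposition; it is imported verbatim from \cite{Kong-Li-Wang-2025}, so there is nothing in-paper to match your argument against.) But your proposal stops exactly where the substance of that cited result lies: you never prove that every $w\in E$ actually lies \emph{on} a cycle. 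Your eventual-periodicity argument only shows the forward orbit of $w$ \emph{reaches} a cycle, and your announced fix --- show every point of $E$ has an $L$-predecessor in $E$ and that digit extraction permutes $E$ --- is left as a plan, with the decisive question (how the unitarity of the matrix in (\ref{eq:matrix}) forces the congruence $d\ell\equiv -dw\pmod N$ to be solvable in $L$, and how to handle multiple $L$-codings) explicitly deferred. Even granting predecessors, a finite-set argument does not immediately make $w$ itself periodic when the extraction map is multivalued, so the "counting / matching argument" is not bookkeeping; it is the theorem. The same gap reappears in your fallback route, where the claim $\wh{\mu_{N,B}}(w-w')=0$ for distinct $w,w'\in E$ is asserted rather than proved.

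That this missing step genuinely needs the Hadamard hypothesis (and hence cannot be absorbed into the "arithmetic" part of your argument) can be seen from a small example: take $N=3$, $L=\{0,3\}$, $d=2$. Then $1/2\in K(3,\{0,3\})\cap(\Z/2)$ has the unique coding $0,3,3,3,\ldots$, so its forward orbit is $1/2\mapsto 3/2\mapsto 3/2\mapsto\cdots$; it reaches the cycle $\{3/2\}$ but is not itself a cycle point, so $E$ strictly contains the union of cycles. Of course $(3,\{0,2\},\{0,3\})$ is not a Hadamard triple (the column for $\ell=3$ duplicates the column for $\ell=0$), which is precisely the point: the identity $E=\bigcup\{m_B\text{-cycles}\}$, equivalently $\Lambda_0\subseteq\Gamma$, is exactly the place where the unitarity of (\ref{eq:matrix}) must be converted into a combinatorial or Fourier-analytic statement, and your proposal does not carry out that conversion. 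As it stands, the proof is incomplete at its central step.
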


\subsection{The order of $a$ module $p$}
Let $\PP$ be the set of primes and let $a \in \N_{\ge 2}$.
For $p \in \PP$ with $p \nmid a$, the \emph{order of $a$ modula $p$} is defined to be the smallest positive integer satisfying $a^n \equiv 1 \pmod{p}$ and is denoted by $\mathrm{Ord}_a(p)$.
For $0< \delta < 1$, define
\begin{equation}\label{eq:A-a-delta}
  A_a(\delta) = \big\{ p \in \PP: p \nmid a \;\text{and}\; \mathrm{Ord}_a(p) > p^{\delta} \big\}.
\end{equation}
It is clear that if $0< \delta_1 < \delta_2 < 1$ then we have $A_a(\delta_2) \subset A_a(\delta_1)$.
In 1976, Erd\H{o}s \cite{Erdos-1976} proved that \[ \lim_{x \to +\f} \frac{\#\big( A_2(1/2) \cap [0,x] \big)}{\pi(x)} = 1. \]
Erd\H{o}s's techniques can be used to prove similar results for any $a \in \N_{\ge 2}$.
\begin{theorem}\cite{Erdos-1976}\label{thm:Erdos-1976}
  For any $a \in \N_{\ge 2}$, we have \[ \lim_{x \to +\f} \frac{\#\big( A_a(1/2) \cap [0,x] \big)}{\pi(x)} = 1. \]
\end{theorem}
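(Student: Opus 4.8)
The plan is to show that the complementary set of primes has density zero, i.e. that
\[
\#\big\{ p \le x:\ p \nmid a,\ \mathrm{Ord}_a(p) \le p^{1/2} \big\} = o(\pi(x));
\]
since $\#\{p \le x: p \mid a\} = O(1)$, this gives $\#\big(A_a(1/2)\cap[0,x]\big) = \pi(x) - o(\pi(x))$ and hence the claimed limit. Call a prime $p \le x$ \emph{exceptional} if $\mathrm{Ord}_a(p) \le p^{1/2}$, and for such $p$ write $d = \mathrm{Ord}_a(p)$; then $d \le x^{1/2}$, $d \mid p-1$, $p \mid a^d - 1$ (in fact $p \mid \Phi_d(a)$, the $d$-th cyclotomic value), and $p \ge d^2$. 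I would count exceptional primes by splitting according to the size of $d$ at the level $x^{1/3}$.

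For $d \le x^{1/3}$ the argument is elementary: every prime with $\mathrm{Ord}_a(p) \le x^{1/3}$ divides $\prod_{1 \le d \le x^{1/3}}(a^d - 1)$, so their number is at most $\log_2\big(\prod_{d \le x^{1/3}}(a^d-1)\big) \le (\log_2 a)\sum_{d \le x^{1/3}} d \ll_a x^{2/3} = o(\pi(x))$. (One can replace the product by an $\mathrm{lcm}$ or by $\prod \Phi_d(a)$, but this is not needed here.)

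For $x^{1/3} < d \le x^{1/2}$ the trivial input no longer suffices: using only $d \mid p-1$, Brun--Titchmarsh gives $\ll x/(\phi(d)\log x)$ exceptional primes of order $d$, and summing $1/\phi(d)$ over this range loses a factor $\asymp \log x$, producing a bound of order $x$. The decisive point — and, I expect, the main obstacle — is to exploit the stronger condition that $a$ is a $d$-th power residue modulo $p$, equivalently that $p$ splits completely in the Kummer field $\mathbb{Q}\big(\zeta_d, a^{1/d}\big)$, whose degree is $\asymp d\,\phi(d)$. Following Erd\H{o}s's method, I would invoke the (unconditional) large-sieve bound
\[
\#\big\{ p \le x:\ \mathrm{Ord}_a(p) = d \big\} \ \ll_a\ \frac{x}{d^2}\,(\log x)^{O(1)},
\]
valid uniformly for $d$ up to essentially $x^{1/2}$, with the thin top range $d \in \big(x^{1/2}/(\log x)^{A},\, x^{1/2}\big]$ treated separately and crudely — there $p$ is confined to the short interval $[d^2,x]$ within a single residue class modulo $d$, contributing $\ll x/d + 1$ per $d$ and $o(\pi(x))$ after summation. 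Summing the displayed bound over $x^{1/3} < d \le x^{1/2}$ gives $\ll_a x^{2/3}(\log x)^{O(1)} = o(\pi(x))$. Combining the two ranges, the number of exceptional primes is $o(\pi(x))$, which proves the theorem; the same proof applies verbatim for every $a \ge 2$, only the implied constants depending on $a$. I would stress that the Kummer-extension input is genuinely needed: the elementary estimates alone yield only $\#\{\text{exceptional } p \le x\} \le \big(\tfrac12 \log a + o(1)\big)\pi(x)$, i.e. a positive lower density of primes in $A_a(1/2)$ when $a \le 7$, and it is precisely the arithmetic of $\mathbb{Q}(\zeta_d, a^{1/d})$, harnessed through the large sieve, that upgrades this to density one.
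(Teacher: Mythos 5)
Your proposal hinges on the displayed ``large-sieve'' bound $\#\{p\le x:\ \mathrm{Ord}_a(p)=d\}\ll_a x\,d^{-2}(\log x)^{O(1)}$, claimed unconditionally and uniformly for $d$ up to essentially $x^{1/2}$, and this is exactly where the argument breaks. First, the justification offered for it is based on a misidentification: $\mathrm{Ord}_a(p)=d$ does \emph{not} mean that $a$ is a $d$-th power residue modulo $p$, nor that $p$ splits completely in $\mathbb{Q}(\zeta_d,a^{1/d})$; complete splitting in that Kummer field is equivalent to $d$ dividing the \emph{index} $(p-1)/\mathrm{Ord}_a(p)$, a different condition. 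Second, even with the correct interpretation, no unconditional result of this strength is available for an individual large $d$: for $d\asymp x^{1/2}$ the claim would say that $a^d-1$ has only $(\log x)^{O(1)}$ prime factors below $d^2$ lying in the class $1\bmod d$, a power-saving statement about small prime factors of Mersenne-type numbers that is far beyond current technology (even GRH-based Chebotarev/Hooley estimates for $\mathbb{Q}(\zeta_m,a^{1/m})$ have error terms $\gg x^{1/2}\log x$, useless at this range, and the large sieve for fixed $a$ gives nothing of this shape). Third, your ``crude'' treatment of the top range $d\in(x^{1/2}(\log x)^{-A},x^{1/2}]$ fails numerically: counting integers $\equiv 1\pmod d$ in $[d^2,x]$ gives $\ll x/d$ per $d$, and summing over the $\asymp x^{1/2}$ values of $d$ in that range yields $\gg x$, not $o(\pi(x))$; using instead the divisor bound for $a^d-1$ gives $\asymp \frac{\log a}{2}\pi(x)$, again not $o(\pi(x))$. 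So precisely the range your plan treats ``crudely'' or via the unproved lemma is the whole difficulty.

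For comparison, the paper does not reprove this statement: it cites Erd\H{o}s (1976), and the known proof route is genuinely different from yours. One splits at $d\le x^{1/2}(\log x)^{-2}$, where the elementary bound via $\prod_{d}(a^d-1)$ already gives $o(\pi(x))$ (your cut at $x^{1/3}$ discards most of what the elementary bound can do, forcing the unproved lemma to carry the whole middle range), and in the remaining critical range one forgets the order condition entirely and uses the fact that $d\mid p-1$: the number of primes $p\le x$ such that $p-1$ has \emph{any} divisor in $(x^{1/2}(\log x)^{-2},x^{1/2}]$ is $o(\pi(x))$ by Erd\H{o}s-type (now Ford's) results on the distribution of divisors of shifted primes in short multiplicative intervals near $\sqrt{x}$. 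That divisor-distribution input is the missing idea in your sketch; without it (or a proof of your claimed uniform bound), the argument does not close.
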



\subsection{Shifted primes with large prime factors}
Let $P^+(n)$ denote the largest prime factor of a positive integer $n$ with the convention that $P^+(1) = 1$.
In 1969, Goldfeld \cite{Goldfeld-1969} showed that
\begin{equation}\label{eq:Goldfeld-1969}
  \liminf_{x \to +\f} \frac{\#\big\{ p \le x: \; p \in \PP,\; P^+(p-1) > x^{1/2} \big\}}{\pi(x)} \ge \frac{1}{2}.
\end{equation}
by using the Bombieri-Vinogradov theorem and the Brun-Titchmarsh theorem.

For $a \in \N_{\ge 2}$ and $p \in \PP$ with $p \nmid a$, Fermat's little theorem states that \[ a^{p-1} \equiv 1 \pmod{p}. \]
It follows that \[ \mathrm{Ord}_a(p)  \mid  p-1. \]
Goldfeld \cite{Goldfeld-1969} also proved that for almost all primes $p$ for which $p-1$ has a large prime factor $q$, then $q$ also divides $\mathrm{Ord}_a(p)$, which relates (\ref{eq:Goldfeld-1969}) to the set $A_a(\delta)$.
The following lemma is incorporated within the proof of Theorem 2 in \cite{Goldfeld-1969}.

\begin{lemma}\label{lemma:Goldfeld-1969}
  For any $a \in \N_{\ge 2}$, we have
  \[ \lim_{x \to +\f} \frac{\#\big\{ p\le x: p \in \PP, \;\exists\; q\in \PP \;\text{such that}\; q>x^{1/2},\; q \mid (p-1) \;\text{but}\; q \nmid \mathrm{Ord}_a(p) \}}{\pi(x)} = 0. \]
\end{lemma}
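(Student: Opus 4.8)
The plan is to deduce this lemma from Erd\H{o}s's theorem (Theorem \ref{thm:Erdos-1976}) via a short divisibility observation, with no further analytic input needed. Abbreviate
\[ S(x) = \big\{ p\le x: p \in \PP,\; \exists\, q\in \PP \text{ with } q>x^{1/2},\; q \mid (p-1),\; q \nmid \mathrm{Ord}_a(p) \big\}, \]
so that the assertion is precisely $\#S(x) = o(\pi(x))$ as $x\to+\f$.

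The first and main step I would carry out is the claim that \emph{every $p\in S(x)$ satisfies $\mathrm{Ord}_a(p)<p^{1/2}$}, and hence $p\notin A_a(1/2)$. To prove it, fix $p\in S(x)$ together with a witnessing prime $q$. Since $p\le x$, we have $q>x^{1/2}\ge p^{1/2}$, so $q^2>p>p-1$; this forces $q$ to divide $p-1$ exactly once, say $p-1=qu$ with $\gcd(q,u)=1$. Fermat's little theorem gives $\mathrm{Ord}_a(p)\mid p-1=qu$, and because $q$ is a prime not dividing $\mathrm{Ord}_a(p)$ we conclude $\mathrm{Ord}_a(p)\mid u=(p-1)/q$. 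Consequently
\[ \mathrm{Ord}_a(p)\;\le\;\frac{p-1}{q}\;<\;\frac{p}{q}\;\le\;\frac{p}{x^{1/2}}\;\le\;\frac{p}{p^{1/2}}\;=\;p^{1/2}, \]
where the last inequality again uses $p\le x$. Recalling the definition (\ref{eq:A-a-delta}) of $A_a(\delta)$, this says exactly that $p\notin A_a(1/2)$.

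It then follows that $S(x)\subseteq\big(\PP\setminus A_a(1/2)\big)\cap[0,x]$, whence
\[ 0\;\le\;\frac{\#S(x)}{\pi(x)}\;\le\;1-\frac{\#\big(A_a(1/2)\cap[0,x]\big)}{\pi(x)}\;\longrightarrow\;0\qquad(x\to+\f) \]
by Theorem \ref{thm:Erdos-1976}, which is the desired conclusion. Granting Theorem \ref{thm:Erdos-1976}, the argument has no real obstacle; the substance lies entirely in that theorem. The point I would emphasize is that one cannot bypass it by soft means: the naive estimate $\#S(x)\le\sum_{m\le x^{1/2}}\#\{p\in\PP:p\mid a^m-1\}\ll_a x/\log x$ only produces $\#S(x)=O(\pi(x))$, so the averaged control of $\mathrm{Ord}_a(p)$ furnished by Erd\H{o}s's method is genuinely what delivers the $o(\pi(x))$ saving.
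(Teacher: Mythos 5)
Your argument is correct. The key observation is sound: if $p\le x$ and some prime $q>x^{1/2}\ge p^{1/2}$ divides $p-1$ but not $\mathrm{Ord}_a(p)$, then from $\mathrm{Ord}_a(p)\mid p-1$ and $\gcd\big(\mathrm{Ord}_a(p),q\big)=1$ one gets $\mathrm{Ord}_a(p)\le (p-1)/q<p/x^{1/2}\le p^{1/2}$, so $p\notin A_a(1/2)$; the exceptional set is therefore contained in the primes up to $x$ outside $A_a(1/2)$, which has relative density $0$ by Theorem \ref{thm:Erdos-1976}. (The remark that $q$ divides $p-1$ exactly once is not even needed: coprimality of $\mathrm{Ord}_a(p)$ with $q$ already gives $\mathrm{Ord}_a(p)\mid (p-1)/q^{v}$ for $v\ge 1$; and the finitely many primes dividing $a$ are negligible.) Note, however, that this is a genuinely different route from the paper's: the paper does not reprove the lemma at all but quotes it as a step inside Goldfeld's proof of Theorem 2 in \cite{Goldfeld-1969}, an argument which predates Erd\H{o}s's theorem and proceeds by direct counting/sieve estimates rather than by appeal to Theorem \ref{thm:Erdos-1976}. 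Your derivation shows that, with the threshold $x^{1/2}$ as stated, Lemma \ref{lemma:Goldfeld-1969} is a formal consequence of Theorem \ref{thm:Erdos-1976}, so that Corollary \ref{cor:Baker-Harman} really only needs Erd\H{o}s plus Baker--Harman; there is no circularity, since Erd\H{o}s's theorem is proved independently of the lemma. The one limitation worth recording is that your reduction is tied to the exponent $1/2$: for a version of the lemma with $q>x^{\delta}$ and $\delta<1/2$ the bound $\mathrm{Ord}_a(p)<p^{1-\delta}$ no longer excludes $p$ from $A_a(1/2)$, whereas Goldfeld's direct argument is not so constrained; this does not affect the statement as given. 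Your closing observation that the naive count of primes dividing $\prod_{m\le x^{1/2}}(a^m-1)$ only yields $O(\pi(x))$ is also accurate and correctly locates where the analytic content lies.
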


The exponent $1/2$ in (\ref{eq:Goldfeld-1969}) has been improved to $0.677$ by Baker and Harman \cite{Baker-Harman-1995,Baker-Harman-1998}, which is the best record up to now.

\begin{theorem}\cite{Baker-Harman-1995,Baker-Harman-1998}
  We have
  \[ \liminf_{x \to +\f} \frac{\#\big\{ p \le x: \; p \in \PP,\; P^+(p-1) > x^{0.677} \big\}}{\pi(x)} >0. \]
\end{theorem}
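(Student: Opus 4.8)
The plan is to reduce the statement to a lower bound for $\sum_{q}\pi(x;q,1)$, the sum over primes $q$ in a suitable range (here $\pi(x;q,1)=\#\{p\le x:\ p\equiv 1\ (\mathrm{mod}\ q)\}$), and then to extract that lower bound from Harman's prime-detecting sieve fed with Bombieri--Vinogradov-type and bilinear estimates.

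Write $\theta=0.677$ and fix $\vartheta\in(\theta,1)$. Since $2\theta>1$, no integer $n\le x$ has two distinct prime factors exceeding $x^{\theta}$, so $\sum_{x^{\theta}<q\le x^{\vartheta}}\pi(x;q,1)$ equals the number of primes $p\le x$ for which $p-1$ has a prime factor in $(x^{\theta},x^{\vartheta}]$; as each such $p$ satisfies $P^+(p-1)>x^{\theta}$, we get
\[
  \#\big\{p\le x:\ P^+(p-1)>x^{\theta}\big\}\ \ge\ \sum_{\substack{q\in\PP\\ x^{\theta}<q\le x^{\vartheta}}}\pi(x;q,1),
\]
so it suffices to bound the right-hand side below by $\gg\pi(x)$. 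First I would introduce the complementary variable $m=(p-1)/q\le x^{1-\theta}$; this recasts the task as counting primes $p=qm+1\le x$ with $q$ prime in a dyadic piece of $(x^{\theta},x^{\vartheta}]$ and $m$ running over an interval of length $\asymp x^{1-\theta}$. It is precisely the averaging over this range of $m$---the \emph{bilinear} structure $m\cdot q$---that makes the problem accessible, the fixed-$q$ version being barred by the parity obstruction.

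Next I would apply a lower-bound sieve of Harman's prime-detecting type (built on the Rosser--Iwaniec linear sieve and iterated Buchstab identities, with the sifting sums split by a Heath-Brown or Vaughan identity), reducing the desired bound to the positivity of a main term minus two families of error sums. The first are Type I sums, essentially $\sum_{d\le D}|\pi(x;d,1)-\pi(x)/\phi(d)|$ and close variants, which the Bombieri--Vinogradov theorem handles for $D\le x^{1/2-\ep}$ and, exploiting the \emph{fixed} residue class $1\ (\mathrm{mod}\ d)$, its extensions past $x^{1/2}$ (Fouvry; Bombieri--Friedlander--Iwaniec) handle somewhat further, via the dispersion method and Weil's bound for Kloosterman sums. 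The second are Type II bilinear sums, of the shape $\sum_{m\sim M}\sum_{n\sim N}\alpha_m\beta_n$ constrained by a congruence to a modulus up to $x^{\vartheta}$, with $MN\asymp x$ and $M,N$ in prescribed ranges; these succumb to the large sieve together with the Deshouillers--Iwaniec estimates for averages of Kloosterman sums, the range of $m$ (reaching $x^{1-\theta}$) being wide enough for the bounds to be decisive. The main term is $\gg\pi(x)\sum_{x^{\theta}<q\le x^{\vartheta}}1/\phi(q)\gg\pi(x)\log(\vartheta/\theta)\gg\pi(x)$, and the lower-bound sieve weights remain positive at every relevant sifting parameter once one performs the numerical optimization---which closes exactly at the exponent $0.677$. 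This gives $\sum_{x^{\theta}<q\le x^{\vartheta}}\pi(x;q,1)\gg\pi(x)$ uniformly for large $x$, hence the stated $\liminf>0$.

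The hard part is the Type II step: organizing the Heath-Brown/Vaughan decomposition so that every resulting piece is either a Type I sum of admissible level $D$ or a Type II sum with $(M,N)$ lying in a range reachable by the Kloosterman-sum machinery, and then driving the attendant numerical optimization down to $\theta=0.677$. This is the technical core of the Baker--Harman argument; it leans on the full strength of the Deshouillers--Iwaniec bounds (ultimately the spectral theory of automorphic forms via the Kuznetsov formula) and on an intricate bookkeeping of the many interlocking ranges.
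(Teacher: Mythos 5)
The paper does not prove this statement at all: it is imported verbatim as an external theorem of Baker and Harman (the ``Brun--Titchmarsh on average'' work), and only its consequence, via Goldfeld's lemma, is used later. So the only question is whether your sketch would stand as a proof, and it does not. Your opening reduction is fine and is indeed the standard passage (going back to Goldfeld): since $2\cdot 0.677>1$, no $p-1\le x$ has two prime factors exceeding $x^{0.677}$, so $\#\{p\le x:\ P^+(p-1)>x^{0.677}\}\ge\sum_{q}\pi(x;q,1)$ with $q$ prime in $(x^{0.677},x^{\vartheta}]$, and the expected main term $\pi(x)\sum_q 1/\varphi(q)\asymp\pi(x)$ is of the right order by Mertens. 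But from that point on, everything of substance is asserted rather than proved: the lower bound $\sum_q\pi(x;q,1)\gg\pi(x)$ for prime moduli up to $x^{0.677}$ \emph{is} the Baker--Harman theorem, and your text handles it by naming the machinery (Harman's sieve, Heath-Brown/Vaughan decompositions, Bombieri--Vinogradov, Fouvry and Bombieri--Friedlander--Iwaniec, Deshouillers--Iwaniec Kloosterman-sum estimates) and declaring that ``the numerical optimization closes exactly at $0.677$.'' Nothing in the proposal lets a reader verify which Type I/Type II ranges are actually admissible, how the residue class $1\bmod q$ is exploited past the $x^{1/2}$ barrier, or why the bookkeeping terminates at $0.677$ rather than some other exponent; the exponent is the one thing the statement is about, and it is taken on faith. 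So the gap is essentially the whole theorem: what you have written is an accurate high-level summary of the known proof's architecture, not a proof.

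Two smaller points. The aside that the fixed-$q$ case is ``barred by the parity obstruction'' is misplaced: for a single modulus $q>x^{1/2}$ the obstacle is that no individual-modulus technology (even GRH) reaches that range, not sieve parity. And if you intend to lean on the literature anyway, the honest form of this step is exactly what the paper does --- cite Baker--Harman --- in which case the sketch adds nothing that can be checked independently of those papers.
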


Together with Lemma \ref{lemma:Goldfeld-1969}, we obtain the following corollary.

\begin{corollary}\label{cor:Baker-Harman}
  For any $a \in \N_{\ge 2}$, we have \[ \lim_{x \to +\f} \frac{\#\big( A_a(0.677) \cap [0,x] \big)}{\pi(x)} >0. \]
  In particular, the set $A_a(0.677)$ is infinite.
\end{corollary}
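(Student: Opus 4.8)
The plan is to obtain the corollary as a routine combination of the Baker--Harman theorem with Lemma~\ref{lemma:Goldfeld-1969}: the former supplies a positive-density set of primes $p \le x$ whose shifted value $p-1$ has a prime factor exceeding $x^{0.677}$, and the latter lets us discard the density-zero subset of those primes for which this large prime factor fails to divide $\mathrm{Ord}_a(p)$. Write $\omega(a)$ for the number of distinct primes dividing $a$.

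Concretely, fix $a \in \N_{\ge 2}$, take $x$ large (so that $x^{0.677} > 1$), and for a prime $p \le x$ set $q = P^+(p-1)$. Let $\mathcal{B}(x) = \{\, p \le x : p \in \PP,\ P^+(p-1) > x^{0.677} \,\}$ and let $\mathcal{E}(x)$ be the exceptional set in Lemma~\ref{lemma:Goldfeld-1969}. I would argue that if $p \in \mathcal{B}(x)$ and $p \nmid a$, then $q$ is a prime with $q > x^{0.677} \ge x^{1/2}$ and $q \mid (p-1)$, so there are two possibilities: either $q \nmid \mathrm{Ord}_a(p)$, in which case $p \in \mathcal{E}(x)$ with $q$ as the witness; or $q \mid \mathrm{Ord}_a(p)$, in which case, using $p \le x$,
\[ \mathrm{Ord}_a(p) \;\ge\; q \;>\; x^{0.677} \;\ge\; p^{0.677}, \]
so that $p \in A_a(0.677)$ by the definition~(\ref{eq:A-a-delta}). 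Since at most $\omega(a)$ primes $p \le x$ divide $a$, this yields the inclusion
\[ \mathcal{B}(x) \;\subseteq\; \big( A_a(0.677) \cap [0,x] \big) \,\cup\, \mathcal{E}(x) \,\cup\, \{\, p \in \PP : p \mid a \,\}, \]
hence $\#\big( A_a(0.677) \cap [0,x] \big) \ge \#\mathcal{B}(x) - \#\mathcal{E}(x) - \omega(a)$. Dividing by $\pi(x)$, letting $x \to +\f$, and invoking the Baker--Harman bound $\liminf_{x\to\f}\#\mathcal{B}(x)/\pi(x) > 0$ together with $\#\mathcal{E}(x)/\pi(x) \to 0$ from Lemma~\ref{lemma:Goldfeld-1969}, I would conclude
\[ \liminf_{x \to +\f} \frac{\#\big( A_a(0.677) \cap [0,x] \big)}{\pi(x)} \;\ge\; \liminf_{x \to +\f} \frac{\#\mathcal{B}(x)}{\pi(x)} \;>\; 0. \]
The final assertion that $A_a(0.677)$ is infinite is then immediate, since $\pi(x) \to +\f$.

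There is no serious obstacle here: all the depth is already contained in the two cited inputs, and the only care needed is the elementary step from $x^{0.677}$ to $p^{0.677}$ (legitimate because $p \le x$ and $t \mapsto t^{0.677}$ is increasing), matching the existential quantifier in Lemma~\ref{lemma:Goldfeld-1969} to the specific witness $q = P^+(p-1)$, and observing that the finitely many primes dividing $a$ contribute nothing to the density.
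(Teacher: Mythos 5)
Your argument is correct and is exactly the intended deduction in the paper, which simply states that the corollary follows from the Baker--Harman theorem together with Lemma~\ref{lemma:Goldfeld-1969}; you have filled in the same combination (positive-density set of primes with $P^+(p-1)>x^{0.677}$, minus the density-zero exceptional set, minus the finitely many primes dividing $a$), including the harmless passage from $x^{0.677}$ to $p^{0.677}$. No gaps; note only that the limit in the corollary's statement is really a liminf, which is what your proof (correctly) delivers.
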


\section{Proofs of Theorems \ref{thm-main-1} and \ref{thm-main-2}}\label{sec:proof}
In this section, we always assume that $(\mu_{N,B}, \Lambda)$ is the canonical spectral pair generated by a Hadamard triple $(N,B,L)$ in $\R$ with $0\in B \cap L$ and $\# B < N$, and write $d = \gcd(B)$.
We first give a characterization of spectral eigenvalues.

\begin{proposition}\label{prop:eigenvalue-equivalent}
  For $q \in \N_{\ge 2}$, if $\gcd(q,N) = 1$, then $q$ is a spectral eigenvalue of $(\mu_{N,B}, \Lambda)$ if and only if
  \begin{equation}\label{eq:eigenvalue-condition}
    K(N, L) \cap \frac{\Z}{qd} = K(N,L) \cap \frac{\Z}{d}.
  \end{equation}
\end{proposition}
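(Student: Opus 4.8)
The statement to be proved characterises, for $q$ coprime to $N$, when $q$ is a spectral eigenvalue of $(\mu_{N,B},\Lambda)$ — that is, when $q\Lambda$ is again a spectrum of $\mu_{N,B}$ — purely in terms of the rational points of the Cantor set $K(N,L)$.

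The plan is first to reduce the problem to a question about cycles. Note that $\{e_{q\lambda}:\lambda\in\Lambda\}$ is an orthonormal basis of $L^2(\mu_{N,B})$ if and only if $\{e_\lambda:\lambda\in\Lambda\}$ is one for the push-forward of $\mu_{N,B}$ under the dilation $x\mapsto qx$; an elementary computation identifies this push-forward with the self-similar measure $\mu_{N,qB}$ generated by the Hadamard triple $(N,qB,q^{-1}L)$, where $q^{-1}L=\{q^{-1}\ell\bmod N:\ell\in L\}$ is well defined because $\gcd(q,N)=1$, and where $\gcd(qB)=qd$. So the question becomes whether $\Lambda$ is a spectrum of $\mu_{N,qB}$. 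Equivalently, one may argue directly with $q\Lambda=\bigcup_n q\Lambda_n$, which is generated by the iterated function system $\mathcal F_{N,qL}$ from the root set $q\Lambda_0=-q\big(K(N,L)\cap\tfrac{\Z}{d}\big)$.

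Next I would apply the Fourier-analytic description of spectra of self-similar measures: a set obtained by iterating such an IFS from a root set is a spectrum precisely when it is orthogonal and its root set meets $-C$ for every cycle $C$ on which the modulus of the relevant mask equals $1$, and for these canonical constructions orthogonality is automatic once the cycle condition holds. For the mask $m_{qB}(x)=m_B(qx)$ one has $|m_{qB}(x)|=1$ iff $qx\in\tfrac{\Z}{d}$ iff $x\in\tfrac{\Z}{qd}$, using $\gcd\{b-b':b,b'\in B\}=\gcd(B)=d$. Now $x\mapsto qx$ is a bijection between cycles $C$ of $\mathcal F_{N,L}$ and cycles $C'=qC$ of $\mathcal F_{N,qL}$, both sitting inside $K(N,qL)=qK(N,L)$; under it, "$|m_{qB}|\equiv1$ on $C'$" translates to "$C\subseteq\tfrac{\Z}{qd}$", while "$-C'$ lies in $q\Lambda_0$" translates to "$-C\subseteq\Lambda_0=-\big(K(N,L)\cap\tfrac{\Z}{d}\big)$", i.e. "$C\subseteq\tfrac{\Z}{d}$". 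Here I would use the fact — which follows from \textbf{Theorem A} together with Proposition \ref{prop:construct-spectrum}, since that proposition's $\Lambda_0$ must coincide with the set of negatives of all $m_B$-cycles — that $K(N,L)\cap\tfrac{\Z}{d}$ is exactly the union of the $m_B$-cycles of $\mathcal F_{N,L}$. Putting this together, $q$ is a spectral eigenvalue of $(\mu_{N,B},\Lambda)$ if and only if every cycle $C$ of $\mathcal F_{N,L}$ with $C\subseteq\tfrac{\Z}{qd}$ already satisfies $C\subseteq\tfrac{\Z}{d}$.

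It then remains to match this cycle condition with the stated equality. The inclusion $\supseteq$ in (\ref{eq:eigenvalue-condition}) is automatic, so both sides concern whether $K(N,L)$ contains a point of $\tfrac{\Z}{qd}$ outside $\tfrac{\Z}{d}$; a cycle $C\subseteq\tfrac{\Z}{qd}$ with $C\not\subseteq\tfrac{\Z}{d}$ plainly exhibits such a point. Conversely, given $x\in K(N,L)\cap\tfrac{\Z}{qd}$ with $x\notin\tfrac{\Z}{d}$: because $\#L=\#B<N$ gives $\dim_{\mathrm{H}}K(N,L)=\log\#L/\log N<1$, Lemma \ref{lemma:period} furnishes $x$ with an eventually periodic coding, and deleting its pre-period produces a cycle $C$ of $\mathcal F_{N,L}$ each of whose points has the form $N^j x-(\text{integer})$; since $N$ is coprime to $q$ this gives $C\subseteq\tfrac{\Z}{qd}$ immediately, whereas writing $x=(y+c)/N^m$ with $y\in C$ and $c\in\Z$ one sees that $y\in\tfrac{\Z}{d}$ would force $x\in\tfrac{\Z}{dN^m}\cap\tfrac{\Z}{qd}=\tfrac{\Z}{\gcd(dN^m,qd)}=\tfrac{\Z}{d}$, contrary to assumption; hence $C\not\subseteq\tfrac{\Z}{d}$, and the equivalence follows.

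The step most in need of care is the passage from "spectrum" to "cycles": the triple $(N,B,qL)$ is not itself Hadamard, so the usual dictionary cannot be quoted verbatim, and one must either route everything through the genuine Hadamard triple $(N,qB,q^{-1}L)$ for $\mu_{N,qB}$ or re-run the transfer-operator / completeness argument to verify that (i) the failure of $\sum_{\lambda\in q\Lambda}|\widehat{\mu_{N,B}}(\xi+\lambda)|^2\equiv1$ is detected only along $m_B$-cycles of $\mathcal F_{N,qL}$, and (ii) the particular root set $-q\big(K(N,L)\cap\tfrac{\Z}{d}\big)$ — which consists exactly of the negatives of the $m_B$-cycle points of $\mathcal F_{N,qL}$ — yields an orthogonal family, so that the cycle condition is genuinely equivalent to $q\Lambda$ being a spectrum and not merely to its completeness. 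The identification of $K(N,L)\cap\tfrac{\Z}{d}$ with the $m_B$-cycle points should likewise be pinned down from the minimality clause in \textbf{Theorem A}. Everything else is routine bookkeeping with codings and with the coprimality $\gcd(q,N)=1$.
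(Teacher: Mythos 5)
Your argument turns on two claims that the paper neither states nor you prove, and one of your factual assertions is simply wrong. First, the parenthetical claim that $(N,B,qL)$ ``is not itself Hadamard'' is false: since $\gcd(q,N)=1$, multiplication by $q$ acts as a Galois automorphism on $N$-th roots of unity, so the defining column-orthogonality sums are preserved; this is exactly Lemma \ref{lemma:multiply-q}, and it is the lever the paper actually uses. Its proof applies Proposition \ref{prop:construct-spectrum} to the Hadamard triple $(N,B,qL)$ to produce a concrete spectrum $\Lambda'$ of $\mu_{N,B}$ with $\Lambda'\supset q\Lambda$ and $\Lambda'_0=-q\bigl(K(N,L)\cap\tfrac{\Z}{qd}\bigr)$; since a proper subfamily of an orthonormal basis cannot be complete, $q\Lambda$ is a spectrum iff $q\Lambda=\Lambda'$, and both directions of the proposition then reduce to comparing $\Lambda'_0$ with $q\Lambda_0$. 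No cycle analysis, transfer operator, or completeness criterion is needed.

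Second, the heart of your equivalence is the assertion that a set obtained by iterating $\Gamma\mapsto N\Gamma+qL$ from a root set is a spectrum \emph{precisely when} the root set contains $-C$ for every extreme cycle (with orthogonality automatic). Theorem A is only a sufficiency statement about the smallest such set; the necessity half of your ``dictionary'' is a nontrivial completeness/transfer-operator theorem that appears nowhere in the paper, and you explicitly defer it (``re-run the transfer-operator / completeness argument''). That is a genuine gap, not bookkeeping. Relatedly, your identification of $K(N,L)\cap\tfrac{\Z}{d}$ with the union of $m_B$-cycle points does not follow from Theorem A plus Proposition \ref{prop:construct-spectrum} as you claim: the proposition asserts that its $\Lambda_0=-\bigl(K(N,L)\cap\tfrac{\Z}{d}\bigr)$ generates the canonical spectrum, not that $\Lambda_0$ is minimal or consists only of cycle points; a priori $K(N,L)\cap\tfrac{\Z}{d}$ could contain strictly pre-periodic points, and ruling this out (or showing it does not matter) requires an argument you have not given. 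Your coding computation in the last step (producing a cycle in $\tfrac{\Z}{qd}$ not contained in $\tfrac{\Z}{d}$, using $\gcd(q,N)=1$) is fine, but it only helps once the unproven criterion is in place. As it stands the proposal does not constitute a proof; the paper's two-spectra comparison is both shorter and avoids all of these issues.
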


We need the following lemma about Hadamard triples to prove the proposition.

\begin{lemma}\cite[Lemma 3.4]{Kong-Li-Wang-2025}\label{lemma:multiply-q}
  Let $(N,B,L)$ be a Hadamard triple in $\R$.
  If $q \in \N_{\ge 2}$ with $\gcd(q,N)=1$, then $(N,B,qL)$ is also a Hadamard triple.
\end{lemma}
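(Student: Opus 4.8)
The plan is to translate the unitarity of the Hadamard matrix into the vanishing of certain exponential sums, and then to observe that replacing $L$ by $qL$ with $\gcd(q,N)=1$ amounts, on those sums, to applying a Galois automorphism of $\Q(\zeta_N)$ — which of course preserves the zero.

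First I would record the standard reformulation. Write $\zeta_N = e^{2\pi \mathrm{i}/N}$ and, for $t \in \Z$, set $S_B(t) = \sum_{b \in B} \zeta_N^{\,bt} \in \Z[\zeta_N]$, so that $S_B(t) = \#B \cdot m_B(t/N)$ and $S_B(t)$ depends only on $t \bmod N$. Writing $H$ for the matrix in \eqref{eq:matrix}, its $(b,\ell)$-entry is $\frac{1}{\sqrt{\#B}}\zeta_N^{\,b\ell}$, and computing $H^*H$ together with the fact that $H$ is square (since $\#B = \#L$) shows that $H$ is unitary if and only if $S_B(\ell - \ell') = 0$ for all distinct $\ell, \ell' \in L$. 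In particular this forces the elements of $L$ to be pairwise distinct modulo $N$, since $S_B(0) = \#B \ne 0$. Now since $q \ne 0$, multiplication by $q$ is injective on $\Z$, so $qL \subset \Z$ satisfies $\#(qL) = \#L = \#B \ge 2$ and the matrix associated with $(N,B,qL)$ is again square; hence it remains only to prove that $S_B\big(q(\ell-\ell')\big) = 0$ for all distinct $\ell, \ell' \in L$.

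This is the heart of the argument. Because $\gcd(q,N) = 1$, the assignment $\zeta_N \mapsto \zeta_N^{\,q}$ extends to a field automorphism $\sigma_q$ of $\Q(\zeta_N)$, and for every $t \in \Z$ one has
\[ \sigma_q\big(S_B(t)\big) = \sigma_q\Big( \sum_{b \in B} \zeta_N^{\,bt} \Big) = \sum_{b \in B} \zeta_N^{\,bqt} = S_B(qt). \]
Consequently $S_B(t) = 0$ forces $S_B(qt) = \sigma_q(0) = 0$. Applying this with $t = \ell - \ell'$ for distinct $\ell, \ell' \in L$, where $S_B(\ell - \ell') = 0$ by the unitarity of the matrix associated with $(N,B,L)$, yields $S_B\big(q(\ell-\ell')\big) = 0$, i.e. $m_B\big(q(\ell-\ell')/N\big) = 0$. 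Therefore the matrix associated with $(N,B,qL)$ is unitary, which is exactly the statement that $(N,B,qL)$ is a Hadamard triple.

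I do not expect a genuine obstacle here; the only point deserving a word of care is that $S_B(t)$ is a sum of $N$-th roots of unity, hence lies in $\Q(\zeta_N)$ where $\sigma_q$ acts, and that $\sigma_q$ sends $\zeta_N^{\,bt}$ to $\zeta_N^{\,bqt}$ and not to some other conjugate — which is precisely where $\gcd(q,N)=1$ enters. One can also bypass Galois theory by the equivalent elementary remark that if $t \not\equiv 0 \bmod N$, then $\zeta_N^{\,t}$ is a primitive $m$-th root of unity for $m = N/\gcd(N,t) \ge 2$, so its minimal polynomial $\Phi_m$ divides $\sum_{b \in B} x^{b}$ in $\Q[x]$ (after clearing denominators/negative exponents), and $(\zeta_N^{\,t})^{q}$ is again a primitive $m$-th root of unity because $m \mid N$ implies $\gcd(q,m) = 1$; hence $\sum_{b \in B} (\zeta_N^{\,t})^{bq} = 0$ as well.
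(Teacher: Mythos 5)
Your proof is correct. The paper itself does not prove this lemma but quotes it from the cited reference, so there is no internal proof to compare against; your reduction of unitarity to the vanishing of $S_B(\ell-\ell')$ for distinct $\ell,\ell'\in L$, followed by the observation that the Galois automorphism $\zeta_N\mapsto\zeta_N^{q}$ of $\Q(\zeta_N)$ (available exactly because $\gcd(q,N)=1$) carries $S_B(\ell-\ell')=0$ to $S_B\big(q(\ell-\ell')\big)=0$, is the standard and complete argument, and your elementary rephrasing via the cyclotomic polynomial $\Phi_m$ dividing $\sum_{b\in B}x^{b}$ is an equivalent valid variant.
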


\begin{proof}[Proof of Proposition \ref{prop:eigenvalue-equivalent}]
By Proposition \ref{prop:construct-spectrum}, the set $\Lambda$ is constructed as follows: let
  \begin{equation}\label{eq:Lambda-0}
    \Lambda_0 = - \Big( K(N,L) \cap \frac{\Z}{d}  \Big),
  \end{equation}
  and $\Lambda_n = N \Lambda_{n-1} + L$ for $n \in \N$,
  then $$\Lambda = \bigcup_{n=0}^\f \Lambda_n.$$

Since $\gcd(q,N) = 1$, by Lemma \ref{lemma:multiply-q}, we have $(N,B,qL)$ is also a Hadamard triple.
Applying Proposition \ref{prop:construct-spectrum} for the Hadamard triple $(N,B,qL)$, we obtain a spectrum $\Lambda'$ of $\mu_{N,B}$ as follows: let
  \begin{equation}\label{eq:Lambda-0-prime}
    \Lambda'_0 = - \Big( K(N,qL) \cap \frac{\Z}{d} \Big),
  \end{equation}
  and $\Lambda'_n = N \Lambda'_{n-1} + q L$ for $n \in \N$,
  then the set $$\Lambda' = \bigcup_{n=0}^\f \Lambda'_n$$
  is a spectrum of $\mu_{N,B}$.
By (\ref{eq:Lambda-0}) and (\ref{eq:Lambda-0-prime}), we have
\[ \Lambda'_0 = - q\Big( K(N,L) \cap \frac{\Z}{qd} \Big) \supset - q\Big( K(N,L) \cap \frac{\Z}{d} \Big) = q \Lambda_0. \]
By recursion, we obtain $\Lambda'_n = N \Lambda'_{n-1} + q L \supset q\big( N \Lambda_{n-1} + L \big) = q \Lambda_n$ for any $n \in \N$.
It follows that $$\Lambda' = \bigcup_{n=0}^\f \Lambda'_n \supset \bigcup_{n=0}^\f q\Lambda_n = q\Lambda.$$
Note that $\Lambda'$ is a spectrum of $\mu_{N,B}$.
Thus, $q\Lambda$ is a spectrum of $\mu_{N,B}$ if and only if $q\Lambda = \Lambda'$.
Therefore, we conclude that $q$ is a spectral eigenvalue of $(\mu_{N,B}, \Lambda)$ if and only if $q\Lambda = \Lambda'$.

We first show the sufficiency. Suppose that $(\ref{eq:eigenvalue-condition})$ holds. Then by (\ref{eq:Lambda-0}) and (\ref{eq:Lambda-0-prime}), we have $\Lambda_0' = q \Lambda$.
By recursion, we obtain $\Lambda'_n =  q \Lambda_n$ for all $n \in \N$.
It follows that $\Lambda' = q\Lambda$.
Thus, the number $q$ is a spectral eigenvalue of $(\mu_{N,B}, \Lambda)$.

Next, we prove the necessity.
Suppose that $q$ is a spectral eigenvalue of $(\mu_{N,B}, \Lambda)$.
Then we have $q\Lambda = \Lambda'$.
Take arbitrarily $$ x_0 \in K(N,L) \cap \frac{\Z}{qd}. $$
By (\ref{eq:Lambda-0}), we have $\Lambda_0 \subset (\Z/d)$.
By recursion, $\Lambda_n = N\Lambda_{n-1}+L \subset (\Z/d)$ for all $n \in \N$.
Thus, we obtain $\Lambda \subset (\Z/d)$.
By (\ref{eq:Lambda-0-prime}) we have $-qx_0 \in \Lambda_0' \subset \Lambda' = q \Lambda$.
It follows that $-x_0 \in \Lambda \subset (\Z/d)$, i.e., $x_0 \in (\Z/d)$.
So, \[ x_0 \in K(N,L) \cap \frac{\Z}{d}.\]
Since $x_0$ is arbitrary, we conclude that \[ \Big( K(N,L) \cap \frac{\Z}{qd} \Big) \subset \Big( K(N,L) \cap \frac{\Z}{d} \Big).  \]
The inverse inclusion is obvious.
Thus, we obtain (\ref{eq:eigenvalue-condition}) as desired.
\end{proof}

Recall the set $A_a(\delta)$ defined in (\ref{eq:A-a-delta}).
Next, we give a sufficient condition for the spectral pair $(\mu_{N,B},\Lambda)$ having infinitely many spectral eigenvalues.

\begin{proposition}\label{prop:large-eigenvalue}
  For $\dim_{\mathrm{H}} K(N,L) < \delta < 1$, if the set $A_N(\delta)$ is infinite, then there exists $p_0>0$, depending on $K(N,L)$ and $\delta$, such that every prime $p \in A_N(\delta) \cap [p_0,+\f)$ is a spectral eigenvalue of $(\mu_{N,B},\Lambda)$.
\end{proposition}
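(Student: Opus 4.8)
The plan is to combine the characterization of spectral eigenvalues in Proposition \ref{prop:eigenvalue-equivalent} with the period bound for rational points in Lemma \ref{lemma:period}. First note that every prime $p \in A_N(\delta)$ satisfies $p \nmid N$, hence $\gcd(p,N) = 1$, so Proposition \ref{prop:eigenvalue-equivalent} applies and it suffices to show
\[ K(N,L) \cap \frac{\Z}{pd} = K(N,L) \cap \frac{\Z}{d} \]
for all sufficiently large $p \in A_N(\delta)$. Since $N > \#B \ge 2$ we have $N \ge 3$, so Lemma \ref{lemma:period} is available for the IFS $\mathcal{F}_{N,L}$, and $s := \dim_{\mathrm{H}} K(N,L)$ satisfies $s < \delta < 1$. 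The inclusion $K(N,L)\cap\frac{\Z}{d} \subseteq K(N,L)\cap\frac{\Z}{pd}$ is immediate, so only the reverse containment needs work.

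Next I would take an arbitrary $x \in K(N,L) \cap \frac{\Z}{pd}$ and write it in lowest terms as $x = v/u$ with $u \in \N$, $v \in \Z$, $\gcd(u,v) = 1$; then $u \mid pd$. If $p \nmid u$ then $u \mid d$ (as $p$ is prime), so $x \in \Z/d$ and there is nothing to prove. So assume $p \mid u$, aiming for a contradiction when $p$ is large. By Lemma \ref{lemma:period} there is a constant $c > 0$, depending only on $K(N,L)$, such that $x$ has an eventually periodic base-$N$ coding with period length $t \le c\, u^{s} \le c\, (pd)^{s}$. Unwinding such a coding, with pre-period $m$ and period $t$, yields $x = R/\big(N^m(N^t-1)\big)$ for some $R \in \Z$, hence $u \mid N^m(N^t-1)$; since $p \mid u$ and $p \nmid N$, it follows that $p \mid N^t-1$, i.e. $\mathrm{Ord}_N(p) \mid t$, and in particular $\mathrm{Ord}_N(p) \le c\,(pd)^{s} = (c d^{s})\, p^{s}$.

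On the other hand, $p \in A_N(\delta)$ means $\mathrm{Ord}_N(p) > p^{\delta}$, and combining the two bounds gives $p^{\delta - s} < c d^{s}$. As $\delta - s > 0$, this is impossible once $p > p_0 := (c d^{s})^{1/(\delta - s)}$, a quantity depending only on $K(N,L)$ and $\delta$. Hence for every $p \in A_N(\delta) \cap [p_0,+\f)$ the case $p \mid u$ cannot occur, so $K(N,L) \cap \frac{\Z}{pd} \subseteq \Z/d$; this is (\ref{eq:eigenvalue-condition}), and Proposition \ref{prop:eigenvalue-equivalent} then shows $p$ is a spectral eigenvalue of $(\mu_{N,B},\Lambda)$.

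I do not expect a genuine obstacle: the substantive input — that rational points of a self-similar set of dimension below $1$ admit codings whose periods are bounded by a fixed power of the denominator — is precisely Lemma \ref{lemma:period}, and the remainder is elementary arithmetic. The only step demanding care is the bookkeeping in the second paragraph: turning an eventually periodic coding into the divisibility $u \mid N^m(N^t-1)$ and then removing the factor $p$ from $N^m$ via $p \nmid N$ (the greatest common divisor of $u$ and $N$ may well be nontrivial, but this is harmless for the prime $p$). That the threshold $p_0$ depends only on the allowed data ($K(N,L)$ and $\delta$) is then automatic, since $c$ is furnished by Lemma \ref{lemma:period} while $d$ and $N$ are fixed.
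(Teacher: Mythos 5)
Your proposal is correct and follows essentially the same route as the paper: reduce via Proposition \ref{prop:eigenvalue-equivalent} to the set equality (\ref{eq:eigenvalue-condition}), apply Lemma \ref{lemma:period} to a hypothetical point with denominator divisible by $p$, deduce $p \mid N^t-1$ and hence $\mathrm{Ord}_N(p) \le c d^s p^s$, and contradict $\mathrm{Ord}_N(p) > p^\delta$ for $p$ large. The only cosmetic difference is that you argue directly on an arbitrary $x \in K(N,L) \cap \frac{\Z}{pd}$ while the paper phrases it as a proof by contradiction, and your threshold $p_0$ omits the paper's (unnecessary) requirement $p_0 > N$, which is harmless since $p \in A_N(\delta)$ already gives $p \nmid N$.
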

\begin{proof}
  Write $s=\dim_{\mathrm{H}} K(N,L)$ and let $c$ be the constant in Lemma \ref{lemma:period}.
  Choose a large enough integer $p_0 > N$ satisfying \[ p_0^{\delta-s} \ge c d^s.\]
  Take any prime $p \in  A_N(\delta) \cap [p_0,+\f)$. Then we have $p \ge p_0> N$ and
  \begin{equation}\label{eq:order-lower}
    \mathrm{Ord}_N(p) > p^\delta \ge p_0^{\delta-s} p^s \ge c d^s p^s.
  \end{equation}

  Suppose that $p$ is not a spectral eigenvalue of $(\mu_{N,B},\Lambda)$.
  Note that $\gcd(p,N) =1$. By Proposition \ref{prop:eigenvalue-equivalent}, we have
  \[ K(N, L) \cap \frac{\Z}{pd} \ne  K(N,L) \cap \frac{\Z}{d}. \]
  That is, \[ K(N, L) \cap \frac{\Z \setminus (p\Z) }{pd} \ne \emptyset. \]
  Take \[ x = \frac{v}{pu} \in K(N, L) \cap \frac{\Z \setminus (p\Z) }{pd}, \] where $v \in \Z$, $u \in \N$, $u \mid d$, and $\gcd(v,pu)=1$.
  Consider the self-similar set $K(N,L)$, and by Lemma \ref{lemma:period}, the number $x$ has an eventually periodic coding $\ell_{1} \ell_{2} \ldots \ell_{j}(\ell_{j+1}\ldots\ell_{j+m})^\f$ with each $\ell_k \in L$, and
  \begin{equation}\label{eq:order-upper}
    m \le c (p u)^s \le c d^s p^s.
  \end{equation}
  Note that
  \begin{align*}
    \frac{v}{pu} = x & = \sum_{k=1}^{j} \frac{\ell_k}{N^k} + \bigg( 1 + \frac{1}{N^m} + \frac{1}{N^{2m}} + \cdots \bigg) \sum_{k=1}^{m} \frac{\ell_{j+k}}{N^{j+k}} \\
    & = \sum_{k=1}^{j} \frac{\ell_k}{N^k} + \frac{N^m}{N^m - 1} \sum_{k=1}^{m} \frac{\ell_{j+k}}{N^{j+k}} \\
    & \in \frac{\Z}{N^j (N^m -1)}.
  \end{align*}
  Since $\gcd(p,v)=1$ and $\gcd(p,N)=1$, we have \[ p \mid (N^m-1),\;\text{i.e.},\; N^m \equiv 1 \pmod{p}. \]
  It follows that $\mathrm{Ord}_{N}(p) \mid m$.
  By (\ref{eq:order-upper}), we obtain \[ \mathrm{Ord}_{N}(p) \le m \le c d^s p^s, \]
  which contradicts (\ref{eq:order-lower}).
  Therefore, we conclude that $p$ is a spectral eigenvalue of $(\mu_{N,B},\Lambda)$.

  Since $p$ is taken arbitrarily, we complete the proof.
\end{proof}

Combined with results in number theory, Theorem \ref{thm-main-1} follows from Proposition \ref{prop:large-eigenvalue}.

\begin{proof}[Proof of Theorem \ref{thm-main-1}]
  (i) Since $\# B < N^{0.677}$, we have \[ s = \dim_{\mathrm{H}} K(N,L) \le \frac{\log \# L}{\log N} = \frac{\log \# B}{\log N} < 0.677. \]
  By Corollary \ref{cor:Baker-Harman}, the set $A_N(0.677)$ is infinite.
  By Proposition \ref{prop:large-eigenvalue}, every sufficiently large prime in $A_N(0.677)$ is a spectral eigenvalue of $(\mu_{N,B},\Lambda)$.
  The desired result follows directly from Corollary \ref{cor:Baker-Harman}.

  (ii) By using the same argument, we obtain the result from Proposition \ref{prop:large-eigenvalue} and Theorem \ref{thm:Erdos-1976}.
\end{proof}

Finally, we prove Theorem \ref{thm-main-2} by Theorem \ref{thm:finiteness} and Proposition \ref{prop:eigenvalue-equivalent}.

\begin{proof}[Proof of Theorem \ref{thm-main-2}]
  For $q \in \N_{\ge 2}$ with $\gcd(q,N)=1$, by Proposition \ref{prop:eigenvalue-equivalent}, $q$ is a spectral eigenvalue of $(\mu_{N,B}, \Lambda)$ if and only if \[ K(N, L) \cap \frac{\Z}{qd} = K(N,L) \cap \frac{\Z}{d}, \]
  which is equivalent to
  \begin{equation}\label{eq:eigenvalue-condition-2}
    K(N, dL) \cap \frac{\Z}{q} = K(N,dL) \cap \Z.
  \end{equation}

  Let $p=p_1 p_2 \ldots p_k$. Since $\#L = \# B < N$, we have $\dim_{\mathrm{H}} K(N,dL) < 1$. Note that $\gcd(p,N)=1$.
  By Theorem \ref{thm:finiteness}, we have \[ \#\big( D_p \cap K(N,dL) \big) < +\f. \]
  Note that $$D_p = \bigcup_{n=0}^\f \frac{\Z}{p^n},$$
  where the sequence $\{ \Z / p^n\}_{n=0}^\f$ is increasing with respect to the set inclusion.
  Thus, there exists $n_0 \in \N$ such that
  \begin{equation}\label{eq:n-0}
    K(N,dL) \cap D_p = K(N,dL) \cap \frac{\Z}{p^{n_0}}.
  \end{equation}

  Suppose that $(p_1 p_2 \ldots p_k)^{n_0} = p^{n_0}$ is a spectral eigenvalue of of $(\mu_{N,B}, \Lambda)$. Then by (\ref{eq:eigenvalue-condition-2}), we have
  \[ K(N, dL) \cap \frac{\Z}{p^{n_0}} = K(N,dL) \cap \Z. \]
  Together with (\ref{eq:n-0}), we conclude that
  \[ K(N,dL) \cap D_p = K(N,dL) \cap \Z. \]
  For any non-negative integers $n_1,n_2,\ldots,n_k$, observe that \[ \Z \subset \frac{\Z}{p_1^{n_1} p_2^{n_2} \cdots p_k^{n_k}} \subset D_p, \]
  and hence, \[ K(N,dL) \cap \frac{\Z}{p_1^{n_1} p_2^{n_2} \cdots p_k^{n_k}} = K(N,dL) \cap \Z.  \]
  By (\ref{eq:eigenvalue-condition-2}), we conclude that $p_1^{n_1} p_2^{n_2} \cdots p_k^{n_k}$ is a spectral eigenvalue of $(\mu_{N,B}, \Lambda)$.
\end{proof}

\section{Connections with number theory}\label{sec:connection}

\subsection{Artin's primitive root conjecture}

We first recall the content of Artin's primitive root conjecture and refer the reader to \cite{Moree-2012} for a survey.

\noindent
\textbf{Artin's primitive root conjecture}: \emph{For $a \in \N_{\ge 2}$ that is not a square number, there are infinitely many primes $p$ such that $\mathrm{Ord}_a(p) = p-1$.}

Suppose Artin's primitive root conjecture holds.
For any $N \in \N_{\ge 2}$, we can write $N = a^{2^k}$, where $a \in \N_{\ge 2}$ is not a square number and $k \in \N \cup \{0\}$.
Then there are infinitely many primes $p$ such that $\mathrm{Ord}_a(p) = p-1$. For such primes $p$, we have $\mathrm{Ord}_N(p) \ge (p-1)/2^k$.
Thus, for any $0<\delta <1$, the set $A_N(\delta)$ is infinite.
Applying Proposition \ref{prop:large-eigenvalue}, we obtain the following proposition.

\begin{proposition}\label{prop:Artin-conjecture}
Suppose Artin's primitive root conjecture holds.
Let $(\mu_{N,B}, \Lambda)$ be the canonical spectral pair generated by a Hadamard triple $(N,B,L)$ in $\R$ with $0\in B \cap L$ and $\# B < N$.
Then there are infinitely many prime spectral eigenvalues of $(\mu_{N,B},\Lambda)$.
\end{proposition}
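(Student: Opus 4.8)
The plan is to deduce the statement directly from Proposition~\ref{prop:large-eigenvalue}, so the whole task reduces to the following: under Artin's conjecture, produce a single exponent $\delta$ with $\dim_{\mathrm H} K(N,L) < \delta < 1$ for which the set $A_N(\delta)$ is infinite. Since $\#L = \#B < N$, we have $\dim_{\mathrm H} K(N,L) \le \log\#L/\log N < 1$, so such a $\delta$ exists; fix one.

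First I would normalise $N$. By repeatedly extracting square roots as long as the result is a perfect square (a process that terminates, since the $2$-adic height of the tower is at most $\log_2\log_2 N$), we may write $N = a^{2^k}$ with $a \in \N_{\ge 2}$ not a perfect square and $k \in \N \cup \{0\}$. Note that a prime $p$ divides $N$ if and only if it divides $a$. Now apply Artin's primitive root conjecture to $a$: there are infinitely many primes $p$ with $\mathrm{Ord}_a(p) = p-1$. For such a prime $p$ (in particular $p \nmid a$, hence $p \nmid N$), the congruence $N^n \equiv 1 \pmod p$ is equivalent to $a^{2^k n} \equiv 1 \pmod p$, i.e.\ to $\mathrm{Ord}_a(p) \mid 2^k n$; taking $n$ minimal gives
\[
  \mathrm{Ord}_N(p) \;=\; \frac{\mathrm{Ord}_a(p)}{\gcd(\mathrm{Ord}_a(p),\, 2^k)} \;\ge\; \frac{p-1}{2^k}.
\]

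Since $k$ depends only on $N$ and $\delta < 1$, we have $(p-1)/2^k > p^{\delta}$ for all sufficiently large $p$; hence every sufficiently large prime among those furnished by Artin's conjecture lies in $A_N(\delta)$, so $A_N(\delta)$ is infinite. Proposition~\ref{prop:large-eigenvalue} then provides $p_0 > 0$ such that every prime $p \in A_N(\delta) \cap [p_0,+\f)$ is a spectral eigenvalue of $(\mu_{N,B},\Lambda)$, and infinitely many primes satisfy this, which is the claim.

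I do not expect a serious obstacle: once Proposition~\ref{prop:large-eigenvalue} is available, the only point to watch is that passing from $\mathrm{Ord}_a(p)$ to $\mathrm{Ord}_N(p)$ costs at most the fixed factor $2^k$, which is harmless because we only need the order to exceed $p^{\delta}$ for some $\delta < 1$ rather than to be exactly $p-1$. All the genuine content lies in Proposition~\ref{prop:large-eigenvalue} (the rational-points-in-Cantor-sets input via Lemma~\ref{lemma:period}) and in Artin's conjecture itself (the abundance-of-primitive-roots input); the proposition is essentially a packaging of these two facts.
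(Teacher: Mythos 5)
Your argument is correct and is essentially the paper's own proof: write $N=a^{2^k}$ with $a$ not a square, apply Artin's conjecture to $a$ to get infinitely many primes with $\mathrm{Ord}_a(p)=p-1$, deduce $\mathrm{Ord}_N(p)\ge (p-1)/2^k > p^{\delta}$ for large $p$, and invoke Proposition~\ref{prop:large-eigenvalue}. The only difference is that you spell out the order-of-a-power formula and the choice of $\delta$, which the paper leaves implicit.
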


It is worth mentioning that Heath-Brown \cite{Heath-Brown-1986} proved that there exists a constant $C>0$ such that \[ \# \big\{ a \le x: a \in \N_{\ge 2} \;\text{and Artin's primitive root conjecture fails for}\; a \big\} \le C (\log x)^2 . \]
This means that Artin's primitive root conjecture holds for almost all $a \in \N_{\ge 2}$.

\subsection{Elliott–Halberstam conjecture}

For $q \in \N_{\ge 2}$ and $a \in \Z$ with $\gcd(a,q)=1$, define
\[ \pi(x;q,a) = \# \big\{ p \le x: p \in \PP \;\;\text{and}\;\; p \equiv a \pmod{q} \big\}. \]
Dirichlet's theorem on primes in arithmetic progressions states that
\[ \pi(x;q,a) \sim \frac{\pi(x)}{\varphi(q)}\;\;\text{as}\; x \to +\f, \]
where $\varphi(n) = \# \big\{ 1\le k \le n: \gcd(k,n) =1 \big\}$ is Euler's totient function.
The Elliott–Halberstam conjecture is about the distribution of prime numbers in arithmetic progressions.

\noindent
\textbf{Elliott–Halberstam conjecture}: \emph{For $0 < \theta < 1$ and $A>0$, there exists a constant $C >0$ such that \[ \sum_{1 \le q \le x^\theta} \max_{y\le x}\max_{\gcd(a,q)=1} \bigg| \pi(y;q,a) - \frac{\pi(y)}{\varphi(q)} \bigg| \le \frac{C x}{(\log x)^A}\quad \forall x > 2. \]}

The Elliott–Halberstam conjecture was proved for $0<\theta < 1/2$, which is well-known as the Bombieri–Vinogradov theorem.
In \cite[Lemma 4.1]{WangZW-2018}, Wang showed that under the Elliott–Halberstam conjecture, for any $0<\delta< 1$, we have
\[ \lim_{x \to +\f} \frac{\#\big\{ p \le x: \; p \in \PP,\; P^+(p-1) > x^{\delta} \big\}}{\pi(x)} = 1- \rho\Big( \frac{1}{\delta} \Big),\]
where $\rho(u)$ is the Dickman function, defined as the unique continuous solution of the differential equation
\[
\begin{cases}
  \rho(u)=1, & 0\le u\le 1; \\
  u \rho'(u) + \rho(u-1) =0, & u \ge 1.
\end{cases}
\]
Note that $0< \rho(u) < 1$ for $u >1$.
Together with Lemma \ref{lemma:Goldfeld-1969}, we conclude that under the Elliott–Halberstam conjecture, for $a \in \N_{\ge 2}$ and $0<\delta< 1$, we have
\[ \liminf_{ x\to +\f} \frac{\#\big( A_a(\delta) \cap [0,x] \big)}{\pi(x)} >0. \]
Applying Proposition \ref{prop:large-eigenvalue}, we obtain the following proposition.

\begin{proposition}\label{prop:Elliott–Halberstam-conjecture}
Suppose the Elliott–Halberstam conjecture holds.
Let $(\mu_{N,B}, \Lambda)$ be the canonical spectral pair generated by a Hadamard triple $(N,B,L)$ in $\R$ with $0\in B \cap L$ and $\# B < N$.
Then there are infinitely many prime spectral eigenvalues of $(\mu_{N,B},\Lambda)$, and
$$\liminf_{x \to +\f} \frac{\# \big\{ p \le x: \; p \;\text{is a prime and a spectral eigenvalue of}\; (\mu_{N,B},\Lambda) \big\}}{\pi(x)} >0.$$
\end{proposition}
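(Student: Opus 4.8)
The plan is to read off the statement from Proposition~\ref{prop:large-eigenvalue}, the only real work being the verification of its hypothesis under the Elliott–Halberstam conjecture. First I would fix the exponent. Since $\# L = \# B < N$ and $K(N,L)$ is the attractor of the IFS $\mathcal{F}_{N,L}$ consisting of $\# L$ maps of contraction ratio $1/N$, we have $s := \dim_{\mathrm{H}} K(N,L) \le \log\#B/\log N < 1$, exactly as in the proof of Theorem~\ref{thm-main-1}(i). Hence I may choose $\delta$ with $\max\{1/2,\,s\} < \delta < 1$; such $\delta$ exists precisely because $s<1$.

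The core input is the density estimate for $A_N(\delta)$ already recorded just before the statement. Under the Elliott–Halberstam conjecture, Wang's evaluation of the density of shifted primes with a large prime factor gives
\[
  \lim_{x\to+\f}\frac{\#\{\,p\le x:\ p\in\PP,\ P^+(p-1)>x^{\delta}\,\}}{\pi(x)} \;=\; 1-\rho\!\left(\tfrac{1}{\delta}\right) \;>\;0,
\]
the positivity coming from $\rho(u)<1$ for $u>1$ together with $1/\delta>1$. For each such prime $p\le x$ set $q=P^+(p-1)$, so $q>x^{\delta}>x^{1/2}$; Lemma~\ref{lemma:Goldfeld-1969} discards only $o(\pi(x))$ of these primes and guarantees $q\mid\mathrm{Ord}_N(p)$ for the rest, whence $\mathrm{Ord}_N(p)\ge q>x^{\delta}\ge p^{\delta}$ and therefore $p\in A_N(\delta)$. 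This yields
\[
  \liminf_{x\to+\f}\frac{\#\big(A_N(\delta)\cap[0,x]\big)}{\pi(x)} \;\ge\; 1-\rho\!\left(\tfrac1\delta\right) \;>\;0 ,
\]
so in particular $A_N(\delta)$ is infinite.

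Next I would invoke Proposition~\ref{prop:large-eigenvalue} with this $\delta$ (admissible since $s<\delta<1$ and $A_N(\delta)$ is infinite): it produces a threshold $p_0$, depending only on $K(N,L)$ and $\delta$, such that every prime $p\in A_N(\delta)\cap[p_0,+\f)$ is a spectral eigenvalue of $(\mu_{N,B},\Lambda)$. Since removing the finitely many primes below $p_0$ does not change the lower density,
\[
  \liminf_{x\to+\f}\frac{\#\{\,p\le x:\ p\text{ is a prime and a spectral eigenvalue of }(\mu_{N,B},\Lambda)\,\}}{\pi(x)} \;\ge\; \liminf_{x\to+\f}\frac{\#\big(A_N(\delta)\cap[0,x]\big)}{\pi(x)} \;>\;0,
\]
which also exhibits infinitely many such primes, completing the argument.

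The main obstacle is not inside this proof — there it amounts to bookkeeping: choosing $\delta$ strictly between $s$ and $1$, using $\#B<N$ to see $s<1$, and discarding finitely many small primes. The genuine difficulty lies in the two upstream ingredients being imported: Wang's identification of the shifted‑prime density with a value of the Dickman function under the Elliott–Halberstam conjecture, and Goldfeld's transfer lemma (Lemma~\ref{lemma:Goldfeld-1969}) moving a large prime factor of $p-1$ into $\mathrm{Ord}_N(p)$. The one point demanding care in the assembly is that the threshold $x^{1/2}$ built into Lemma~\ref{lemma:Goldfeld-1969} must not exceed $x^{\delta}$, which is why I impose $\delta>1/2$; this is compatible with $s<\delta<1$ since $s<1$, and by the monotonicity $A_N(\delta)\subset A_N(\delta')$ for $\delta'<\delta$ enlarging $\delta$ in this way only strengthens the density conclusion we need.
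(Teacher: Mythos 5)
Your proposal is correct and follows essentially the same route as the paper: under the Elliott–Halberstam conjecture you combine Wang's Dickman-function density for primes with $P^+(p-1)>x^{\delta}$ with Lemma \ref{lemma:Goldfeld-1969} to get positive lower density for $A_N(\delta)$, and then feed this into Proposition \ref{prop:large-eigenvalue}. Your explicit insistence on $\delta>\max\{1/2,s\}$ (so that the $x^{1/2}$ threshold in Lemma \ref{lemma:Goldfeld-1969} applies) is a slightly more careful bookkeeping of a point the paper leaves implicit, but it is the same argument.
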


\begin{proof}[Proof of Theorem \ref{thm:conditional}]
  It follows from Propositions \ref{prop:Artin-conjecture} and \ref{prop:Elliott–Halberstam-conjecture}.
\end{proof}

\section*{Acknowledgements}
The author would like to thank Yuchen Ding for useful discussions on shifted primes with large prime factors, and thank Xing-Gang He for introducing Artin's primitive root conjecture.
The author was supported by the National Natural Science Foundation of China (No. 12501110, 12471085) and the China Postdoctoral Science Foundation (No. 2024M763857).

\bibliographystyle{abbrv}
\bibliography{SpectralMeasure,AnalysisNumberTheory}

@article {Erdos-1976,
    AUTHOR = {Erd\H{o}s, P.},
     TITLE = {Bemerkungen zu einer {A}ufgabe ({E}lem. {M}ath. {\bf 26}
              (1971), 43) by {G}. {J}aeschke},
   JOURNAL = {Arch. Math. (Basel)},
  FJOURNAL = {Archiv der Mathematik},
    VOLUME = {27},
      YEAR = {1976},
    NUMBER = {2},
     PAGES = {159--163},
      ISSN = {0003-889X,1420-8938},
   MRCLASS = {10H15},
  MRNUMBER = {404166},
MRREVIEWER = {S.\ Ikehara},
       DOI = {10.1007/BF01224655},
       URL = {https://doi.org/10.1007/BF01224655},
}

@article {Goldfeld-1969,
    AUTHOR = {Goldfeld, Morris},
     TITLE = {On the number of primes {$p$} for which {$p+a$} has a large
              prime factor},
   JOURNAL = {Mathematika},
  FJOURNAL = {Mathematika. A Journal of Pure and Applied Mathematics},
    VOLUME = {16},
      YEAR = {1969},
     PAGES = {23--27},
      ISSN = {0025-5793},
   MRCLASS = {10.42},
  MRNUMBER = {244176},
MRREVIEWER = {S.\ L.\ Segal},
       DOI = {10.1112/S0025579300004575},
       URL = {https://doi.org/10.1112/S0025579300004575},
}

@incollection {Baker-Harman-1995,
    AUTHOR = {Baker, R. C. and Harman, G.},
     TITLE = {The {B}run-{T}itchmarsh theorem on average},
 BOOKTITLE = {Analytic number theory, {V}ol. 1 ({A}llerton {P}ark, {IL},
              1995)},
    SERIES = {Progr. Math.},
    VOLUME = {138},
     PAGES = {39--103},
 PUBLISHER = {Birkh\"auser Boston, Boston, MA},
      YEAR = {1996},
      ISBN = {0-8176-3824-5},
   MRCLASS = {11N13 (11N36)},
  MRNUMBER = {1399332},
MRREVIEWER = {John\ B.\ Friedlander},
}

@article {Baker-Harman-1998,
    AUTHOR = {Baker, R. C. and Harman, G.},
     TITLE = {Shifted primes without large prime factors},
   JOURNAL = {Acta Arith.},
  FJOURNAL = {Acta Arithmetica},
    VOLUME = {83},
      YEAR = {1998},
    NUMBER = {4},
     PAGES = {331--361},
      ISSN = {0065-1036,1730-6264},
   MRCLASS = {11N25 (11N13 11N36)},
  MRNUMBER = {1610553},
MRREVIEWER = {John\ B.\ Friedlander},
       DOI = {10.4064/aa-83-4-331-361},
       URL = {https://doi.org/10.4064/aa-83-4-331-361},
}

@article {Moree-2012,
    AUTHOR = {Moree, Pieter},
     TITLE = {Artin's primitive root conjecture---a survey},
   JOURNAL = {Integers},
  FJOURNAL = {Integers},
    VOLUME = {12},
      YEAR = {2012},
    NUMBER = {6},
     PAGES = {1305--1416},
      ISSN = {1867-0652,1867-0660},
   MRCLASS = {11N37 (11A07 11B05)},
  MRNUMBER = {3011564},
       DOI = {10.1515/integers-2012-0043},
       URL = {https://doi.org/10.1515/integers-2012-0043},
}

@article {Heath-Brown-1986,
    AUTHOR = {Heath-Brown, D. R.},
     TITLE = {Artin's conjecture for primitive roots},
   JOURNAL = {Quart. J. Math. Oxford Ser. (2)},
  FJOURNAL = {The Quarterly Journal of Mathematics. Oxford. Second Series},
    VOLUME = {37},
      YEAR = {1986},
    NUMBER = {145},
     PAGES = {27--38},
      ISSN = {0033-5606,1464-3847},
   MRCLASS = {11A07 (11N13 11N35)},
  MRNUMBER = {830627},
MRREVIEWER = {D.\ J.\ Lewis},
       DOI = {10.1093/qmath/37.1.27},
       URL = {https://doi.org/10.1093/qmath/37.1.27},
}

@article {WangZW-2018,
    AUTHOR = {Wang, Zhiwei},
     TITLE = {Autour des plus grands facteurs premiers d'entiers
              cons\'ecutifs voisins d'un entier cribl\'e},
   JOURNAL = {Q. J. Math.},
  FJOURNAL = {The Quarterly Journal of Mathematics},
    VOLUME = {69},
      YEAR = {2018},
    NUMBER = {3},
     PAGES = {995--1013},
      ISSN = {0033-5606,1464-3847},
   MRCLASS = {11N36},
  MRNUMBER = {3859219},
MRREVIEWER = {O.\ Ramar\'e},
       DOI = {10.1093/qmath/hay010},
       URL = {https://doi.org/10.1093/qmath/hay010},
}

@article {An-Dong-He-2022,
    AUTHOR = {An, Li-Xiang and Dong, Xin-Han and He, Xing-Gang},
     TITLE = {On spectra and spectral eigenmatrix problems of the planar
              {S}ierpinski measures},
   JOURNAL = {Indiana Univ. Math. J.},
  FJOURNAL = {Indiana University Mathematics Journal},
    VOLUME = {71},
      YEAR = {2022},
    NUMBER = {2},
     PAGES = {913--952},
      ISSN = {0022-2518,1943-5258},
   MRCLASS = {42C05 (28A80)},
  MRNUMBER = {4420109},
MRREVIEWER = {Yan-Song\ Fu},
       DOI = {10.1512/iumj.2022.71.8873},
       URL = {https://doi.org/10.1512/iumj.2022.71.8873},
}

@article {An-Fu-Lai-2019,
    AUTHOR = {An, Li-Xiang and Fu, Xiao-Ye and Lai, Chun-Kit},
     TITLE = {On spectral {C}antor-{M}oran measures and a variant of
              {B}ourgain's sum of sine problem},
   JOURNAL = {Adv. Math.},
  FJOURNAL = {Advances in Mathematics},
    VOLUME = {349},
      YEAR = {2019},
     PAGES = {84--124},
      ISSN = {0001-8708,1090-2082},
   MRCLASS = {42C30 (26A30 28A80 42B10 42C05)},
  MRNUMBER = {3938848},
MRREVIEWER = {Ivan\ S.\ Yaroslavtsev},
       DOI = {10.1016/j.aim.2019.04.014},
       URL = {https://doi.org/10.1016/j.aim.2019.04.014},
}

@article {An-He-2014,
    AUTHOR = {An, Li-Xiang and He, Xing-Gang},
     TITLE = {A class of spectral {M}oran measures},
   JOURNAL = {J. Funct. Anal.},
  FJOURNAL = {Journal of Functional Analysis},
    VOLUME = {266},
      YEAR = {2014},
    NUMBER = {1},
     PAGES = {343--354},
      ISSN = {0022-1236,1096-0783},
   MRCLASS = {28A80 (42B05)},
  MRNUMBER = {3121733},
MRREVIEWER = {Manuel\ Mor\'{a}n},
       DOI = {10.1016/j.jfa.2013.08.031},
       URL = {https://doi.org/10.1016/j.jfa.2013.08.031},
}

@article {An-Lai-2023a,
    AUTHOR = {An, Li-Xiang and Lai, Chun-Kit},
     TITLE = {Product-form {H}adamard triples and its spectral self-similar
              measures},
   JOURNAL = {Adv. Math.},
  FJOURNAL = {Advances in Mathematics},
    VOLUME = {431},
      YEAR = {2023},
     PAGES = {Paper No. 109257, 41},
      ISSN = {0001-8708,1090-2082},
   MRCLASS = {42B10 (28A80 42C30)},
  MRNUMBER = {4630841},
       DOI = {10.1016/j.aim.2023.109257},
       URL = {https://doi.org/10.1016/j.aim.2023.109257},
}

@article {Chen-Cao-2025,
    AUTHOR = {Chen, Ming-Liang and Cao, Jian},
     TITLE = {Spectral structure and spectral eigenmatrix problem of planar
              self-similar measures with {$p$} digits},
   JOURNAL = {J. Geom. Anal.},
  FJOURNAL = {Journal of Geometric Analysis},
    VOLUME = {35},
      YEAR = {2025},
    NUMBER = {7},
     PAGES = {Paper No. 200, 34},
      ISSN = {1050-6926,1559-002X},
   MRCLASS = {28A80 (42C05)},
  MRNUMBER = {4910921},
       DOI = {10.1007/s12220-025-02037-w},
       URL = {https://doi.org/10.1007/s12220-025-02037-w},
}

@article {Chen-Liu-2023,
    AUTHOR = {Chen, Ming-Liang and Liu, Jing-Cheng},
     TITLE = {On spectra and spectral eigenmatrices of self-affine measures
              on {$\Bbb R^n$}},
   JOURNAL = {Bull. Malays. Math. Sci. Soc.},
  FJOURNAL = {Bulletin of the Malaysian Mathematical Sciences Society},
    VOLUME = {46},
      YEAR = {2023},
    NUMBER = {5},
     PAGES = {Paper No. 162, 18},
      ISSN = {0126-6705,2180-4206},
   MRCLASS = {28A25 (28A80 42C05)},
  MRNUMBER = {4616096},
       DOI = {10.1007/s40840-023-01559-2},
       URL = {https://doi.org/10.1007/s40840-023-01559-2},
}

@article {Chi-He-Wu-2025,
    AUTHOR = {Chi, Zi-Chao and He, Xing-Gang and Wu, Zhi-Yi},
     TITLE = {On a distinctive property of Fourier bases associated with $N$-Bernoulli Convolutions},
   JOURNAL = {arXiv:2506.00364},
  FJOURNAL = {arXiv:2506.00364},
      YEAR = {2025},
       URL = {https://arxiv.org/abs/2506.00364},
}

@article {Dai-2016,
    AUTHOR = {Dai, Xin-Rong},
     TITLE = {Spectra of {C}antor measures},
   JOURNAL = {Math. Ann.},
  FJOURNAL = {Mathematische Annalen},
    VOLUME = {366},
      YEAR = {2016},
    NUMBER = {3-4},
     PAGES = {1621--1647},
      ISSN = {0025-5831,1432-1807},
   MRCLASS = {42A65 (28A78 28A80 42B05)},
  MRNUMBER = {3563247},
MRREVIEWER = {Peter\ R.\ Massopust},
       DOI = {10.1007/s00208-016-1374-5},
       URL = {https://doi.org/10.1007/s00208-016-1374-5},
}

@ARTICLE{Dai-Fu-He-2026,
  author =       {Dai, Xin-Rong and Fu, Yan-Song and He, Xing-Gang},
  title =        {The theory of one-dimensional spectral measures (in Chinese)},
  journal =      {Sci. Sin. Math.},
  fjournal =     {SCIENTIA SINICA Mathematica},
  year =         {2026},
  volume =       {56},
  number =       {},
  pages =        {1--26},
  month =        {},
  note =         {},
  abstract =     {},
  keywords =     {},
  source =       {},
  doi =          {10.1360/SSM-2025-0083},
  url =          {https://doi.org/10.1360/SSM-2025-0083},
}

@article {Dutkay-Han-Sun-2014,
    AUTHOR = {Dutkay, Dorin Ervin and Han, Deguang and Sun, Qiyu},
     TITLE = {Divergence of the mock and scrambled {F}ourier series on
              fractal measures},
   JOURNAL = {Trans. Amer. Math. Soc.},
  FJOURNAL = {Transactions of the American Mathematical Society},
    VOLUME = {366},
      YEAR = {2014},
    NUMBER = {4},
     PAGES = {2191--2208},
      ISSN = {0002-9947,1088-6850},
   MRCLASS = {42B05 (28A80)},
  MRNUMBER = {3152727},
MRREVIEWER = {F.\ M\'{o}ricz},
       DOI = {10.1090/S0002-9947-2013-06021-7},
       URL = {https://doi.org/10.1090/S0002-9947-2013-06021-7},
}

@article {Dutkay-Haussermann-2016,
    AUTHOR = {Dutkay, Dorin Ervin and Haussermann, John},
     TITLE = {Number theory problems from the harmonic analysis of a
              fractal},
   JOURNAL = {J. Number Theory},
  FJOURNAL = {Journal of Number Theory},
    VOLUME = {159},
      YEAR = {2016},
     PAGES = {7--26},
      ISSN = {0022-314X,1096-1658},
   MRCLASS = {11A07 (11A51 42C30)},
  MRNUMBER = {3412709},
MRREVIEWER = {Anbhu\ Swaminathan},
       DOI = {10.1016/j.jnt.2015.07.009},
       URL = {https://doi.org/10.1016/j.jnt.2015.07.009},
}

@article {Dutkay-Haussermann-Lai-2019,
    AUTHOR = {Dutkay, Dorin Ervin and Haussermann, John and Lai, Chun-Kit},
     TITLE = {Hadamard triples generate self-affine spectral measures},
   JOURNAL = {Trans. Amer. Math. Soc.},
  FJOURNAL = {Transactions of the American Mathematical Society},
    VOLUME = {371},
      YEAR = {2019},
    NUMBER = {2},
     PAGES = {1439--1481},
      ISSN = {0002-9947,1088-6850},
   MRCLASS = {42B05 (28A25 42A85)},
  MRNUMBER = {3885185},
MRREVIEWER = {Xing-Gang\ He},
       DOI = {10.1090/tran/7325},
       URL = {https://doi.org/10.1090/tran/7325},
}

@incollection {Dutkay-Jorgensen-2008,
    AUTHOR = {Dutkay, Dorin Ervin and Jorgensen, Palle E. T.},
     TITLE = {Fourier series on fractals: a parallel with wavelet theory},
 BOOKTITLE = {Radon transforms, geometry, and wavelets},
    SERIES = {Contemp. Math.},
    VOLUME = {464},
     PAGES = {75--101},
 PUBLISHER = {Amer. Math. Soc., Providence, RI},
      YEAR = {2008},
      ISBN = {978-0-8218-4327-7},
   MRCLASS = {42C40 (28A80)},
  MRNUMBER = {2440130},
MRREVIEWER = {Joseph\ D.\ Lakey},
       DOI = {10.1090/conm/464/09077},
       URL = {https://doi.org/10.1090/conm/464/09077},
}

@article {Dutkay-Jorgensen-2012,
    AUTHOR = {Dutkay, Dorin Ervin and Jorgensen, Palle E. T.},
     TITLE = {Fourier duality for fractal measures with affine scales},
   JOURNAL = {Math. Comp.},
  FJOURNAL = {Mathematics of Computation},
    VOLUME = {81},
      YEAR = {2012},
    NUMBER = {280},
     PAGES = {2253--2273},
      ISSN = {0025-5718,1088-6842},
   MRCLASS = {47B32 (28A35 42B05)},
  MRNUMBER = {2945155},
MRREVIEWER = {Juan\ Matias\ Sepulcre},
       DOI = {10.1090/S0025-5718-2012-02580-4},
       URL = {https://doi.org/10.1090/S0025-5718-2012-02580-4},
}

@article {Dutkay-Kraus-2018,
    AUTHOR = {Dutkay, D. E. and Kraus, I.},
     TITLE = {Number theoretic considerations related to the scaling of
              spectra of {C}antor-type measures},
   JOURNAL = {Anal. Math.},
  FJOURNAL = {Analysis Mathematica},
    VOLUME = {44},
      YEAR = {2018},
    NUMBER = {3},
     PAGES = {335--367},
      ISSN = {0133-3852,1588-273X},
   MRCLASS = {11A07 (11A51 42C30)},
  MRNUMBER = {3854997},
MRREVIEWER = {Chun-Kit\ Lai},
       DOI = {10.1007/s10476-018-0505-5},
       URL = {https://doi.org/10.1007/s10476-018-0505-5},
}

@article {Dutkay-Lai-2017,
    AUTHOR = {Dutkay, Dorin Ervin and Lai, Chun-Kit},
     TITLE = {Spectral measures generated by arbitrary and random
              convolutions},
   JOURNAL = {J. Math. Pures Appl. (9)},
  FJOURNAL = {Journal de Math\'{e}matiques Pures et Appliqu\'{e}es.
              Neuvi\`eme S\'{e}rie},
    VOLUME = {107},
      YEAR = {2017},
    NUMBER = {2},
     PAGES = {183--204},
      ISSN = {0021-7824,1776-3371},
   MRCLASS = {42B10 (28A80 42C30)},
  MRNUMBER = {3597373},
MRREVIEWER = {Leandro\ Zuberman},
       DOI = {10.1016/j.matpur.2016.06.003},
       URL = {https://doi.org/10.1016/j.matpur.2016.06.003},
}

@article {Fuglede-1974,
    AUTHOR = {Fuglede, Bent},
     TITLE = {Commuting self-adjoint partial differential operators and a
              group theoretic problem},
   JOURNAL = {J. Functional Analysis},
  FJOURNAL = {Journal of Functional Analysis},
    VOLUME = {16},
      YEAR = {1974},
     PAGES = {101--121},
      ISSN = {0022-1236},
   MRCLASS = {47F05 (81.47)},
  MRNUMBER = {470754},
       DOI = {10.1016/0022-1236(74)90072-x},
       URL = {https://doi.org/10.1016/0022-1236(74)90072-x},
}

@article {Fu-He-2017,
    AUTHOR = {Fu, Yan-Song and He, Liu},
     TITLE = {Scaling of spectra of a class of random convolution on
              {$\Bbb{R}$}},
   JOURNAL = {J. Funct. Anal.},
  FJOURNAL = {Journal of Functional Analysis},
    VOLUME = {273},
      YEAR = {2017},
    NUMBER = {9},
     PAGES = {3002--3026},
      ISSN = {0022-1236,1096-0783},
   MRCLASS = {28A80 (42A16 42A85 42C05)},
  MRNUMBER = {3692329},
       DOI = {10.1016/j.jfa.2017.06.007},
       URL = {https://doi.org/10.1016/j.jfa.2017.06.007},
}

@article {Fu-He-Wen-2018,
    AUTHOR = {Fu, Yan-Song and He, Xing-Gang and Wen, Zhi-Xiong},
     TITLE = {Spectra of {B}ernoulli convolutions and random convolutions},
   JOURNAL = {J. Math. Pures Appl. (9)},
  FJOURNAL = {Journal de Math\'ematiques Pures et Appliqu\'ees. Neuvi\`eme
              S\'erie},
    VOLUME = {116},
      YEAR = {2018},
     PAGES = {105--131},
      ISSN = {0021-7824,1776-3371},
   MRCLASS = {42A85 (28A80 42B05 42C05)},
  MRNUMBER = {3826550},
MRREVIEWER = {Jan-Olav\ R\"onning},
       DOI = {10.1016/j.matpur.2018.06.002},
       URL = {https://doi.org/10.1016/j.matpur.2018.06.002},
}

@article {Fu-Tang-Wen-2022,
    AUTHOR = {Fu, Yan-Song and Tang, Min-Wei and Wen, Zhi-Ying},
     TITLE = {Convergence of mock {F}ourier series on generalized
              {B}ernoulli convolutions},
   JOURNAL = {Acta Appl. Math.},
  FJOURNAL = {Acta Applicandae Mathematicae},
    VOLUME = {179},
      YEAR = {2022},
     PAGES = {Paper No. 14, 23},
      ISSN = {0167-8019,1572-9036},
   MRCLASS = {42A20 (28A80 42A38)},
  MRNUMBER = {4431038},
MRREVIEWER = {Jing-Cheng\ Liu},
       DOI = {10.1007/s10440-022-00500-2},
       URL = {https://doi.org/10.1007/s10440-022-00500-2},
}

@article {He-Tang-Wu-2019,
    AUTHOR = {He, Xing-Gang and Tang, Min-Wei and Wu, Zhi-Yi},
     TITLE = {Spectral structure and spectral eigenvalue problems of a class
              of self-similar spectral measures},
   JOURNAL = {J. Funct. Anal.},
  FJOURNAL = {Journal of Functional Analysis},
    VOLUME = {277},
      YEAR = {2019},
    NUMBER = {10},
     PAGES = {3688--3722},
      ISSN = {0022-1236,1096-0783},
   MRCLASS = {42A65 (28A78 28A80 42B05)},
  MRNUMBER = {4001085},
MRREVIEWER = {Kazaros\ Kazarian},
       DOI = {10.1016/j.jfa.2019.05.019},
       URL = {https://doi.org/10.1016/j.jfa.2019.05.019},
}

@article {Hutchinson-1981,
    AUTHOR = {Hutchinson, John E.},
     TITLE = {Fractals and self-similarity},
   JOURNAL = {Indiana Univ. Math. J.},
  FJOURNAL = {Indiana University Mathematics Journal},
    VOLUME = {30},
      YEAR = {1981},
    NUMBER = {5},
     PAGES = {713--747},
      ISSN = {0022-2518,1943-5258},
   MRCLASS = {49F20 (00A69 28A12 58C27)},
  MRNUMBER = {625600},
MRREVIEWER = {F.\ J.\ Almgren, Jr.},
       DOI = {10.1512/iumj.1981.30.30055},
       URL = {https://doi.org/10.1512/iumj.1981.30.30055},
}

@article {Jiang-Lu-Wei-2024,
    AUTHOR = {Jiang, Mingxuan and Lu, Jian-Feng and Wei, Sai-Di},
     TITLE = {Spectral structure of a class of self-similar spectral
              measures with product form digit sets},
   JOURNAL = {Banach J. Math. Anal.},
  FJOURNAL = {Banach Journal of Mathematical Analysis},
    VOLUME = {18},
      YEAR = {2024},
    NUMBER = {4},
     PAGES = {Paper No. 66, 35},
      ISSN = {2662-2033,1735-8787},
   MRCLASS = {42C05 (28A80 42A65 42A85)},
  MRNUMBER = {4784890},
MRREVIEWER = {Zhi-Yi\ Wu},
       DOI = {10.1007/s43037-024-00368-4},
       URL = {https://doi.org/10.1007/s43037-024-00368-4},
}

@article {Jorgensen-Kornelson-Shuman-2011,
    AUTHOR = {Jorgensen, Palle E. T. and Kornelson, Keri A. and Shuman,
              Karen L.},
     TITLE = {Families of spectral sets for {B}ernoulli convolutions},
   JOURNAL = {J. Fourier Anal. Appl.},
  FJOURNAL = {The Journal of Fourier Analysis and Applications},
    VOLUME = {17},
      YEAR = {2011},
    NUMBER = {3},
     PAGES = {431--456},
      ISSN = {1069-5869,1531-5851},
   MRCLASS = {28A80 (28D05 42A16 42B10 42C25 46E30)},
  MRNUMBER = {2803943},
MRREVIEWER = {K.\ Gowri\ Navada},
       DOI = {10.1007/s00041-010-9158-x},
       URL = {https://doi.org/10.1007/s00041-010-9158-x},
}

@article {Jorgensen-Pedersen-1998,
    AUTHOR = {Jorgensen, Palle E. T. and Pedersen, Steen},
     TITLE = {Dense analytic subspaces in fractal {$L^2$}-spaces},
   JOURNAL = {J. Anal. Math.},
  FJOURNAL = {Journal d'Analyse Math\'{e}matique},
    VOLUME = {75},
      YEAR = {1998},
     PAGES = {185--228},
      ISSN = {0021-7670,1565-8538},
   MRCLASS = {46E30 (28A75 42C05 46L55 47B38)},
  MRNUMBER = {1655831},
MRREVIEWER = {Javier\ Soria},
       DOI = {10.1007/BF02788699},
       URL = {https://doi.org/10.1007/BF02788699},
}

@article {Kong-Li-Wang-2025,
    AUTHOR = {Kong, Derong and Li, Kun and Wang, Zhiqiang},
     TITLE = {Rational points in Cantor sets and spectral eigenvalue problem for self-similar spectral measures},
   JOURNAL = {arXiv:2503.22960},
  FJOURNAL = {arXiv:2503.22960},
      YEAR = {2025},
       URL = {https://arxiv.org/abs/2503.22960},
}

@article {Laba-Wang-2002,
    AUTHOR = {{\L}aba, Izabella and Wang, Yang},
     TITLE = {On spectral {C}antor measures},
   JOURNAL = {J. Funct. Anal.},
  FJOURNAL = {Journal of Functional Analysis},
    VOLUME = {193},
      YEAR = {2002},
    NUMBER = {2},
     PAGES = {409--420},
      ISSN = {0022-1236,1096-0783},
   MRCLASS = {28A80},
  MRNUMBER = {1929508},
MRREVIEWER = {Henning\ Fernau},
       DOI = {10.1006/jfan.2001.3941},
       URL = {https://doi.org/10.1006/jfan.2001.3941},
}

@article {Li-2011,
    AUTHOR = {Li, Jian-Lin},
     TITLE = {Spectra of a class of self-affine measures},
   JOURNAL = {J. Funct. Anal.},
  FJOURNAL = {Journal of Functional Analysis},
    VOLUME = {260},
      YEAR = {2011},
    NUMBER = {4},
     PAGES = {1086--1095},
      ISSN = {0022-1236,1096-0783},
   MRCLASS = {42B10 (28A80)},
  MRNUMBER = {2747015},
MRREVIEWER = {Andrei\ K.\ Lerner},
       DOI = {10.1016/j.jfa.2010.12.001},
       URL = {https://doi.org/10.1016/j.jfa.2010.12.001},
}

@article {Li-Xing-2017,
    AUTHOR = {Li, Jian-Lin and Xing, Dan},
     TITLE = {Multiple spectra of {B}ernoulli convolutions},
   JOURNAL = {Proc. Edinb. Math. Soc. (2)},
  FJOURNAL = {Proceedings of the Edinburgh Mathematical Society. Series II},
    VOLUME = {60},
      YEAR = {2017},
    NUMBER = {1},
     PAGES = {187--202},
      ISSN = {0013-0915,1464-3839},
   MRCLASS = {28A80 (42A65 42C05)},
  MRNUMBER = {3589848},
MRREVIEWER = {Qi-Rong\ Deng},
       DOI = {10.1017/S0013091515000565},
       URL = {https://doi.org/10.1017/S0013091515000565},
}

@article {Li-Miao-Wang-2022,
    AUTHOR = {Li, Wenxia and Miao, Jun Jie and Wang, Zhiqiang},
     TITLE = {Weak convergence and spectrality of infinite convolutions},
   JOURNAL = {Adv. Math.},
  FJOURNAL = {Advances in Mathematics},
    VOLUME = {404},
      YEAR = {2022},
     PAGES = {Paper No. 108425, 26},
      ISSN = {0001-8708,1090-2082},
   MRCLASS = {28A80 (42C30 60B10)},
  MRNUMBER = {4416138},
MRREVIEWER = {Yan-Song\ Fu},
       DOI = {10.1016/j.aim.2022.108425},
       URL = {https://doi.org/10.1016/j.aim.2022.108425},
}

@article {Li-Wu-2022,
    AUTHOR = {Li, Jin-Jun and Wu, Zhi-Yi},
     TITLE = {On spectral structure and spectral eigenvalue problems for a
              class of self similar spectral measure with product form},
   JOURNAL = {Nonlinearity},
  FJOURNAL = {Nonlinearity},
    VOLUME = {35},
      YEAR = {2022},
    NUMBER = {6},
     PAGES = {3095--3117},
      ISSN = {0951-7715,1361-6544},
   MRCLASS = {28A80 (42C05)},
  MRNUMBER = {4443929},
MRREVIEWER = {Hai-Xiong\ Li},
       DOI = {10.1088/1361-6544/ac6b0c},
       URL = {https://doi.org/10.1088/1361-6544/ac6b0c},
}

@article {Liu-Liu-Tang-Wu-2024,
    AUTHOR = {Liu, Jing-Cheng and Liu, Ming and Tang, Min-Wei and Wu, Sha},
     TITLE = {On spectral eigenmatrix problem for the planar self-affine
              measures with three digits},
   JOURNAL = {Ann. Funct. Anal.},
  FJOURNAL = {Annals of Functional Analysis},
    VOLUME = {15},
      YEAR = {2024},
    NUMBER = {4},
     PAGES = {Paper No. 83, 22},
      ISSN = {2639-7390,2008-8752},
   MRCLASS = {28A25 (28A80 42C05 46C05)},
  MRNUMBER = {4791481},
       DOI = {10.1007/s43034-024-00386-1},
       URL = {https://doi.org/10.1007/s43034-024-00386-1},
}

@article {Liu-Tang-Wu-2023,
    AUTHOR = {Liu, Jing-Cheng and Tang, Min-Wei and Wu, Sha},
     TITLE = {The spectral eigenmatrix problems of planar self-affine
              measures with four digits},
   JOURNAL = {Proc. Edinb. Math. Soc. (2)},
  FJOURNAL = {Proceedings of the Edinburgh Mathematical Society. Series II},
    VOLUME = {66},
      YEAR = {2023},
    NUMBER = {3},
     PAGES = {897--918},
      ISSN = {0013-0915,1464-3839},
   MRCLASS = {28A25 (28A80 42C05 46C05)},
  MRNUMBER = {4637402},
       DOI = {10.1017/s0013091523000469},
       URL = {https://doi.org/10.1017/s0013091523000469},
}

@article {Lu-Dong-Zhang-2022,
    AUTHOR = {Lu, Zheng-Yi and Dong, Xin-Han and Zhang, Peng-Fei},
     TITLE = {Spectrality of some one-dimensional {M}oran measures},
   JOURNAL = {J. Fourier Anal. Appl.},
  FJOURNAL = {The Journal of Fourier Analysis and Applications},
    VOLUME = {28},
      YEAR = {2022},
    NUMBER = {4},
     PAGES = {Paper No. 63, 22},
      ISSN = {1069-5869,1531-5851},
   MRCLASS = {42C05 (28A80)},
  MRNUMBER = {4450149},
MRREVIEWER = {Zhi-Yi\ Wu},
       DOI = {10.1007/s00041-022-09954-2},
       URL = {https://doi.org/10.1007/s00041-022-09954-2},
}

@article {Pan-Ai-2023,
    AUTHOR = {Pan, Wu-Yi and Ai, Wen-Hui},
     TITLE = {Divergence of mock {F}ourier series for spectral measures},
   JOURNAL = {Proc. Roy. Soc. Edinburgh Sect. A},
  FJOURNAL = {Proceedings of the Royal Society of Edinburgh. Section A.
              Mathematics},
    VOLUME = {153},
      YEAR = {2023},
    NUMBER = {6},
     PAGES = {1818--1832},
      ISSN = {0308-2105,1473-7124},
   MRCLASS = {28A80 (42A20 42B05)},
  MRNUMBER = {4666060},
       DOI = {10.1017/prm.2022.68},
       URL = {https://doi.org/10.1017/prm.2022.68},
}

@article {Strichartz-2000,
    AUTHOR = {Strichartz, Robert S.},
     TITLE = {Mock {F}ourier series and transforms associated with certain
              {C}antor measures},
   JOURNAL = {J. Anal. Math.},
  FJOURNAL = {Journal d'Analyse Math\'{e}matique},
    VOLUME = {81},
      YEAR = {2000},
     PAGES = {209--238},
      ISSN = {0021-7670,1565-8538},
   MRCLASS = {42A38 (28A75 42C05)},
  MRNUMBER = {1785282},
MRREVIEWER = {Steen\ Pedersen},
       DOI = {10.1007/BF02788990},
       URL = {https://doi.org/10.1007/BF02788990},
}

@article {Strichartz-2006,
    AUTHOR = {Strichartz, Robert S.},
     TITLE = {Convergence of mock {F}ourier series},
   JOURNAL = {J. Anal. Math.},
  FJOURNAL = {Journal d'Analyse Math\'{e}matique},
    VOLUME = {99},
      YEAR = {2006},
     PAGES = {333--353},
      ISSN = {0021-7670,1565-8538},
   MRCLASS = {42B05 (42A20)},
  MRNUMBER = {2279556},
MRREVIEWER = {Gy\"{o}rgy\ G\'{a}t},
       DOI = {10.1007/BF02789451},
       URL = {https://doi.org/10.1007/BF02789451},
}

@article {WuH-2024,
    AUTHOR = {Wu, Hai-Hua},
     TITLE = {Spectral self-similar measures with alternate contraction
              ratios and consecutive digits},
   JOURNAL = {Adv. Math.},
  FJOURNAL = {Advances in Mathematics},
    VOLUME = {443},
      YEAR = {2024},
     PAGES = {Paper No. 109585, 33},
      ISSN = {0001-8708,1090-2082},
   MRCLASS = {28A80 (42B10 42C05)},
  MRNUMBER = {4712262},
MRREVIEWER = {Ming-Liang\ Chen},
       DOI = {10.1016/j.aim.2024.109585},
       URL = {https://doi.org/10.1016/j.aim.2024.109585},
}

@article {Yi-Zhang-2025a,
    AUTHOR = {Yi, Shan-Feng and Zhang, Min-Min},
     TITLE = {Scaling spectrum of a class of self-similar measures with
              product form on {$\Bbb R$}},
   JOURNAL = {Forum Math.},
  FJOURNAL = {Forum Mathematicum},
    VOLUME = {37},
      YEAR = {2025},
    NUMBER = {3},
     PAGES = {765--776},
      ISSN = {0933-7741,1435-5337},
   MRCLASS = {42C05 (28A80 42A65)},
  MRNUMBER = {4857542},
       DOI = {10.1515/forum-2023-0466},
       URL = {https://doi.org/10.1515/forum-2023-0466},
}

\end{document}